\documentclass[11pt]{article}
\textheight 22.5truecm \textwidth 15truecm
\setlength{\oddsidemargin}{-0.00in}
\setlength{\topmargin}{-.5cm}
\usepackage{authblk}
\usepackage{amssymb}
\usepackage[all]{xy}
\usepackage{mathrsfs}
\usepackage{amsmath,amssymb,amsthm}
\usepackage{extarrows}
\newtheorem{theorem}{Theorem}[]
\newtheorem{lemma}[theorem]{Lemma}

\newtheorem{corollary}[theorem]{Corollary}
\newtheorem{example}[theorem]{Example}

\def\to{\rightarrow}
\def\f{\mathfrak}

\def\c{\mathcal}
\def\b{\mathbf}
\def\r{\mathrm}
\def\bb{\mathbb}
\def\s{\mathscr}
\date{}
\begin{document}
\title{Abstract Hardy-Littlewood Maximal Inequality}
\author{Maysam Maysami Sadr\thanks{sadr@iasbs.ac.ir}\hspace{3mm}\&\hspace{2mm}Monireh Barzegar Ganji\thanks{barzegar.ganji@iasbs.ac.ir}}
\affil{Department of Mathematics, Institute for Advanced Studies in Basic Sciences, Zanjan, Iran}
\maketitle
\begin{abstract}
In this note besides two abstract versions of the Vitali Covering Lemma an abstract Hardy-Littlewood Maximal Inequality,
generalizing weak type $(1,1)$ maximal function inequality, associated to any outer measure and a family of subsets on a set is introduced.
The inequality is (effectively) satisfied if and only if a special numerical constant called Hardy-Littelwood maximal constant is finite.
Two general sufficient conditions for the finiteness of this constant are given and upper bounds for this constant
associated to the family of (centered) balls in homogeneous spaces, family of dyadic cubes in Euclidean spaces,
family of admissible trapezoids in homogeneous trees, and family of Calder\'{o}n-Zygmund sets in $(ax+b)$-group, are considered.
Also a very simple application to find some nontrivial estimates about mass density in Classical Mechanics is given.

\textbf{MSC 2020.} 42B25, 43A99.

\textbf{Keywords.} Hardy-Littlewood maximal function, Vitali's covering lemma, metric measure space.
\end{abstract}
\section{The Main Concept}\label{2005142105}
Let $X$ be a nonempty set. Denote by $\c{P}(X)$ the power set of $X$. Consider following data:
\begin{enumerate}
\item[(HL1)] For every $x\in X$ a nonempty set $\c{Q}_x\subseteq\c{P}(X)$ of subsets of $X$ such that
$x\in Q$ for every $Q\in\c{Q}_x$. We let $\f{Q}:=\{\c{Q}_x\}_x$ and $\c{Q}:=\cup_x\c{Q}_x$.
\item[(HL2)] An outer measure ${\mu}$ on $X$ such that $0<{\mu}(Q)<\infty$ for every $Q$ in $\c{Q}$.
\end{enumerate}
Then to any arbitrary set function $F:\c{Q}\to[0,\infty]$ we may associate a \emph{maximal function}
$$\b{M}^{\f{Q}}_{\mu}F:X\to[0,\infty],\hspace{10mm}\b{M}^{\f{Q}}_{\mu}F(x):=\sup_{Q\in\c{Q}_x}\frac{F(Q)}{{\mu}(Q)},$$
and a \emph{norm} $\|F\|_\c{Q}$ given by
$$\|F\|_\c{Q}:=\sup\Big\{\sum_{i=1}^nF(Q_i):Q_1,\ldots,Q_n\in\c{Q}\hspace{1mm}\text{and}\hspace{1mm}
Q_i\cap Q_j=\emptyset\hspace{1mm}\text{for}\hspace{1mm}i\neq j\Big\}.$$
With the above notations by an \emph{Abstract Hardy-Littlewood Maximal Inequality} we mean an inequality of the form
\begin{equation}\label{2005072329}
{\mu}\Big\{x\in X:\b{M}^{\f{Q}}_{\mu}F(x)>r\Big\}\leq cr^{-1}\|F\|_\c{Q}\hspace{10mm}(r>0)
\end{equation}
satisfying for every set function $F:\c{Q}\to[0,\infty]$ where $c>0$ is a constant independent from $F$.
We call the smallest constant $c$ satisfying (\ref{2005072329}) \emph{Hardy-Littlewood maximal constant} of the pair $(\f{Q},{\mu})$
and denote it by $\s{H}^\f{Q}_\mu$. If there does not exist a finite $c$ satisfying (\ref{2005072329}) then we let $\s{H}^\f{Q}_\mu:=\infty$.
The ordinary weak type $(1,1)$ maximal function inequalities follow from (\ref{2005072329}) by $F(Q)=\int_Q|f|\r{d}\mu$ for locally
integrable functions $f:X\to\bb{C}$. (see (\ref{2005130819}) below.) Note that one of the advantages of (\ref{2005072329}) rather than
ordinary Hardy-Littlewood maximal inequalities is independence of $F$ and $\|F\|_\c{Q}$ from $\mu$.

The main aim of this short note is to show that under some general conditions on $\f{Q}$ and $\mu$ the inequality (\ref{2005072329})
is satisfied for a finite constant $c$, or equivalently, $\s{H}^\f{Q}_\mu<\infty$. To find and state such general conditions covering lemmas play
the crucial role as in the ordinary Hardy-Littlewood maximal function inequalities.
In Section \ref{2005140808} we consider two \emph{Abstract Vitali Covering Lemmas}
that have their own interest. In Section \ref{2005140809} we give two general sufficient conditions for the finiteness of $\s{H}^\f{Q}_\mu$
and find some bounds for Hardy-Littlewood maximal constant of the family of (centered) balls in homogeneous spaces,
family of dyadic cubes in Euclidean spaces, family of admissible trapezoids in homogeneous trees,
and family of Calder\'{o}n-Zygmund sets in $(ax+b)$-group. We also consider a very simple application about mass density in Classical Mechanics.
\section{Abstract Vitali Covering Lemmas}\label{2005140808}
In this section we consider some basic covering lemmas that will be applied in Section \ref{2005140809}.
The proof of the following lemma is similar to the proof of \cite[Theorem 1.2]{Heinonen1}.
\begin{lemma}\label{2005102352}
\emph{\textbf{Abstract Vitali Covering Lemma: First Form.}
Let $X$ be a nonempty set, let $\c{C}\subseteq\c{P}(X)$ be a nonempty set of subsets of $X$, and let $\Delta:\c{B}\to\c{P}(X)$ be a mapping
such that $C\subseteq\Delta(C)$ for every $C\in\c{C}$. Consider the following two conditions:
\begin{enumerate}
\item[(V1)] There exist a bounded set function $\gamma:\c{C}\to[0,\infty)$ and a finite constant $\lambda>1$
such that if $C_1,C_2\in\c{C}$, $C_1\cap C_2\neq\emptyset$ and $\gamma(C_1)\leq\lambda\gamma(C_2)$
then $C_1\subseteq\Delta(C_2)$.
\item[(V2)] There exists a bounded set function $\gamma:\c{C}\to[0,\infty)$ such that for every subfamily $\c{G}$ of $\c{C}$ the maximum
of $\{\gamma(C): C\in\c{G}\}$ exists and such that if $C_1,C_2\in\c{C}$, $C_1\cap C_2\neq\emptyset$ and $\gamma(C_1)\leq\gamma(C_2)$
then $C_1\subseteq\Delta(C_2)$.
\end{enumerate}
Suppose that either (V1) or (V2) are satisfied.
Then there exists a subfamily $\c{D}\subseteq\c{C}$ such that the members of $\c{D}$ are pairwise disjoint and
\begin{equation}\label{2005102135}
\bigcup_{C\in\c{C}}C\subseteq\bigcup_{D\in\c{D}}\Delta(D).
\end{equation}}
\end{lemma}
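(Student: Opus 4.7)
The plan is to construct $\mathcal{D}$ by a greedy disjointness selection, handling (V1) via a generational stratification by $\gamma$-size and (V2) via a direct transfinite recursion. In both cases the key property to establish is that, for each $C\in\mathcal{C}$, there is some $D\in\mathcal{D}$ with $C\cap D\neq\emptyset$ and $\gamma(C)\leq\lambda\gamma(D)$ (or $\gamma(C)\leq\gamma(D)$, respectively), so that $C\subseteq\Delta(D)$ by the relevant hypothesis.

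For (V1), let $M$ be a bound for $\gamma$ and set
\[
\mathcal{C}_k:=\bigl\{C\in\mathcal{C}:M\lambda^{-k}<\gamma(C)\leq M\lambda^{-(k-1)}\bigr\}\qquad(k\geq 1),
\]
so that $\mathcal{C}=\bigcup_k\mathcal{C}_k$. I would then use Zorn's lemma to pick $\mathcal{D}_1\subseteq\mathcal{C}_1$ maximal among pairwise disjoint subfamilies, and inductively choose $\mathcal{D}_k\subseteq\mathcal{C}_k$ maximal among pairwise disjoint subfamilies whose members also miss every element of $\mathcal{D}_1\cup\cdots\cup\mathcal{D}_{k-1}$. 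Put $\mathcal{D}:=\bigcup_k\mathcal{D}_k$, which is pairwise disjoint by construction. Given $C\in\mathcal{C}_k$, maximality forces $C$ to meet some $D\in\mathcal{D}_j$ with $j\leq k$; then $\gamma(D)>M\lambda^{-j}$ and $\gamma(C)\leq M\lambda^{-(k-1)}$ yield $\gamma(C)\leq\lambda\gamma(D)$ (since $j\leq k$), whence $C\subseteq\Delta(D)$ by (V1).

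For (V2), I would proceed by transfinite recursion. Having chosen disjoint $\{D_\beta\}_{\beta<\alpha}\subseteq\mathcal{C}$, let $\mathcal{C}_\alpha$ be the collection of $C\in\mathcal{C}$ that are disjoint from every $D_\beta$, $\beta<\alpha$. If $\mathcal{C}_\alpha=\emptyset$ stop; otherwise pick $D_\alpha\in\mathcal{C}_\alpha$ realizing $\max\{\gamma(C):C\in\mathcal{C}_\alpha\}$, which exists by (V2). Since the $D_\alpha$ are distinct elements of $\mathcal{C}$, the process halts at some ordinal, giving a pairwise disjoint $\mathcal{D}$. For any $C\in\mathcal{C}$ there must be a smallest ordinal $\alpha$ for which $C\notin\mathcal{C}_{\alpha+1}$, i.e.\ $C$ meets $D_\alpha$; because $C\in\mathcal{C}_\alpha$ was a candidate at stage $\alpha$, the maximality of $\gamma(D_\alpha)$ gives $\gamma(C)\leq\gamma(D_\alpha)$, and (V2) yields $C\subseteq\Delta(D_\alpha)$.

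The main obstacles I anticipate are bookkeeping rather than conceptual: in the (V1) argument one must verify that the exponent comparison really does deliver the factor $\lambda$ at every generation boundary (this is why the half-open intervals $(M\lambda^{-k},M\lambda^{-(k-1)}]$ are chosen); and in the (V2) argument one should point out that the set-theoretic recursion is legitimate because the indexing ordinal is bounded by the cardinality of $\mathcal{C}$. Both (V1) and (V2) ultimately reduce to the same structural fact that every $C$ meets a selected $D$ of comparable $\gamma$-size, which is exactly what each hypothesis was designed to exploit.
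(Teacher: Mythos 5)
Your argument is sound in outline and takes a genuinely different route from the paper's. The paper proves both cases with a single application of Zorn's Lemma to the poset $\Omega$ of pairwise disjoint subfamilies $\mathcal{G}$ with the property that any $C$ meeting $\mathcal{G}$ has its gauge dominated by $\lambda$ times the gauge of some member; a maximal element of $\Omega$ is then shown to meet every $C\in\mathcal{C}$. Your (V1) argument instead stratifies $\mathcal{C}$ by gauge size into the generations $(M\lambda^{-k},M\lambda^{-(k-1)}]$ and greedily selects maximal disjoint subfamilies generation by generation, which is essentially the classical $5r$-covering argument; the exponent bookkeeping checks out, since $j\leq k$ gives $\gamma(C)\leq M\lambda^{1-k}\leq M\lambda^{1-j}<\lambda\gamma(D)$. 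Your (V2) argument by transfinite greedy selection of $\gamma$-maximizers is correct and complete (the recursion terminates before the Hartogs number of $\mathcal{C}$, and the first stage at which $C$ is excluded from the candidate pool supplies the required $D_\alpha$), and it has the advantage of treating all of $\mathcal{C}$ uniformly.

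There is one concrete gap, in the (V1) branch only: the claim $\mathcal{C}=\bigcup_k\mathcal{C}_k$ fails when $\gamma(C)=0$ for some $C$, which the hypothesis $\gamma:\mathcal{C}\to[0,\infty)$ permits. Such a $C$ lies in no generation, is never a candidate for any $\mathcal{D}_k$, and need not meet any selected set: take $\mathcal{C}$ to be two disjoint nonempty sets with $\gamma\equiv 0$ and $\Delta(C)=C$; then (V1) holds, your construction returns $\mathcal{D}=\emptyset$, and the covering fails, although the lemma's conclusion holds with $\mathcal{D}=\mathcal{C}$. The repair is one line: append a final generation $\mathcal{C}_\infty:=\{C\in\mathcal{C}:\gamma(C)=0\}$ and adjoin a maximal disjoint subfamily of it whose members avoid all earlier selections; any $C\in\mathcal{C}_\infty$ then meets some selected $D$, and $\gamma(C)=0\leq\lambda\gamma(D)$ gives $C\subseteq\Delta(D)$ by (V1). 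With this patch both branches are correct.
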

\begin{proof}
Suppose that (V1) is satisfied.
Let $\Omega$ denote the set of all $\c{G}\subseteq\c{C}$ such that the members of $\c{G}$ are pairwise disjoint and for every $C\in\c{C}$
if $C\cap G\neq\emptyset$ for some $G\in\c{G}$ then there exists $G'\in\c{G}$ such that $\gamma(C)\leq\lambda\gamma(G')$. For any $D\in\c{C}$
with $\gamma(D)\geq\lambda^{-1}\sup_{C\in\c{C}}\gamma(C)$ we have $\{D\}\in\Omega$. Thus $\Omega$ is not empty.
$\Omega$ is partially ordered by the inclusion and it is clear that the union of any chains of families in $\Omega$ belongs to $\Omega$ too.
So by the Zorn Lemma $\Omega$ has a maximal member $\c{D}$. Let $\c{D}'$ denote the set of those members $C$ of $\c{C}$ such that for every
$D\in\c{D}$, $C\cap D=\emptyset$. Suppose that $\c{D}'\neq\emptyset$ and $D'\in\c{D}'$ be such that
$\gamma(D')\geq\lambda^{-1}\sup_{C\in\c{D}'}\gamma(C)$. Then we must have $\c{D}\cup\{D'\}\in\Omega$, a contradiction with maximality of $\c{D}$.
Thus $\c{D}'=\emptyset$ and hence we can conclude from the property stated in (V1) that the maximal family $\c{D}$ satisfies (\ref{2005102135}).

The proof in the case that (V2) is satisfied is similar: It is enough let in the above proof $\lambda=1$ and replace $\sup$ with $\max$.
\end{proof}
Note that in the above discussion we can ignore all sets $C$ with $\gamma(C)=0$. Indeed it follows from (V1) and (V2) that for such sets $C$
we have $C\subseteq\Delta(D)$ for any other set $D$. We did not exclude such cases for generality.

\begin{corollary}\label{2005122347}
\emph{\textbf{Vitali Covering Lemma.} If $\c{C}$ is a family of open balls of uniformly bounded radius in a metric space $X$
then there exists a subfamily $\c{D}\subseteq\c{C}$ with pairwise disjoint members such that $\cup_{B\in\c{C}}C\subseteq\cup_{B\in\c{D}}5B$,
where $5B$ denote the ball with the same center as $B$ and with the radius $5$ times the radius of $B$.}
\end{corollary}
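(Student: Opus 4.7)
The plan is to apply Lemma \ref{2005102352} under condition (V1), with the mapping $\Delta$ given by $\Delta(B):=5B$ and the set function $\gamma:\c{C}\to[0,\infty)$ defined by $\gamma(B):=\r{rad}(B)$, the radius of $B$. Since by hypothesis the radii in $\c{C}$ are uniformly bounded, $\gamma$ is a bounded set function. It remains to pick a suitable constant $\lambda>1$ and verify the geometric implication demanded by (V1).

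The natural choice is $\lambda:=2$. Concretely, I would verify: if $B_1=B(x_1,r_1)$ and $B_2=B(x_2,r_2)$ are balls in $\c{C}$ with $B_1\cap B_2\neq\emptyset$ and $r_1\leq 2r_2$, then $B_1\subseteq 5B_2$. Pick $z\in B_1\cap B_2$ and any $y\in B_1$; by the triangle inequality
$$d(y,x_2)\leq d(y,x_1)+d(x_1,z)+d(z,x_2)<r_1+r_1+r_2=2r_1+r_2\leq 5r_2,$$
so $y\in 5B_2$. This is the key (and essentially only) calculation in the argument, and it explains the appearance of the constant $5$: it is forced by $2\lambda+1$ with $\lambda=2$. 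Any $\lambda>1$ would work in principle, producing the dilation constant $2\lambda+1$; the value $\lambda=2$ is the standard convenient choice.

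With (V1) verified, Lemma \ref{2005102352} supplies a pairwise disjoint subfamily $\c{D}\subseteq\c{C}$ such that
$$\bigcup_{B\in\c{C}}B\subseteq\bigcup_{B\in\c{D}}\Delta(B)=\bigcup_{B\in\c{D}}5B,$$
which is exactly the conclusion of the corollary. The main obstacle is nothing more than the triangle-inequality calculation above; the real work has already been absorbed into the abstract Zorn-lemma argument of Lemma \ref{2005102352}.
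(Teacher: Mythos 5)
Your proof is correct and follows exactly the paper's approach: apply Lemma \ref{2005102352} with $\Delta(B)=5B$, $\gamma(B)=$ radius of $B$, and $\lambda=2$. You merely spell out the triangle-inequality verification of (V1) that the paper leaves implicit, and that calculation is correct.
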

\begin{proof}
It follows directly from Lemma \ref{2005102352} by $\Delta(B)=(5B)$ and $\gamma(B)=\hspace{1mm}\text{radius of}\hspace{1mm}B$.
Here we may choose $\lambda=2$.
\end{proof}
Lemma \ref{2005102352} with property (V2) will be applied in Section \ref{2005140809} when the coverings of admissible trapezoids is considered.

Now we give another covering lemma similar to Lemma \ref{2005102352} but this time the \emph{gauge set function} $\gamma$ is given and fixed
and but the exact \emph{shape} of the \emph{dilated set} $\Delta(C)$ of $C$ is not important.
\begin{lemma}\label{2004221705}
\emph{\textbf{Abstract Vitali Covering Lemma: Second Form.} Let $X$ be a set.
\begin{enumerate}
\item[(V3)] Let $\c{A}\subseteq\c{P}(X)$ be a
union closed family of subsets of $X$ and let $\gamma:\c{A}\to[0,\infty]$ be a monotone set-function on $\c{A}$ i.e. if $A,A'\in\c{A}$ and
$A\subseteq A'$ then $\gamma(A)\leq\gamma(A')$.
\item[(V4)] Let $\c{B}\subseteq\c{A}$ be a subfamily with the property that for every sequence $(B_n)_n$
of pairwise disjoint members of $\c{B}$ with $\inf_n\gamma(B_n)>0$ we have $\gamma(\cup_{n=1}^\infty B_n)=\infty$.
\end{enumerate}
Suppose that $\c{C}\subseteq\c{B}$ be a subfamily of $\c{B}$ with $\sup_{C\in\c{C}}\gamma(C)<\infty$ such that $\gamma(C)\neq0$ for every $C\in\c{C}$.
Then for every $\lambda>1$ there is a (finite or infinite) countable sequence $(C^{\lambda}_n)_n$ of pairwise disjoint members of $\c{C}$
such that if $\gamma(\cup_{C\in\c{C}}C)<\infty$ then
\begin{equation}\label{2005142100}
\cup_{C\in\c{C}}C\subseteq\cup_n\Delta^\lambda_{\gamma,\c{C}}(C^\lambda_n)
\end{equation}
and if $\gamma(\cup_{C\in\c{C}}C)=\infty$ then either $\inf_{n}\gamma(C^\lambda_n)>0$ or the inclusion (\ref{2005142100}) is satisfied.
Here $\Delta^\lambda_{\gamma,\c{C}}:\c{C}\to\c{P}(X)$ is the \emph{dilating map} defined by
\begin{equation}\label{2005142102}
\Delta^\lambda_{\gamma,\c{C}}(C):=\bigcup_{G\in\c{C},G\cap C\neq\emptyset,\gamma(G)\leq\lambda\gamma(C)}G.
\end{equation}
Moreover suppose that $\c{C}$ has the following additional property: \emph{For every subfamily $\c{G}\subseteq\c{C}$
the maximum of the set $\{\gamma(G):G\in\c{G}\}$ of real numbers exists.} Then there exists a (finite or infinite) countable sequence $(C^{1}_n)_n$ of
pairwise disjoint members of $\c{C}$ such that if $\gamma(\cup_{C\in\c{C}}C)<\infty$ then
\begin{equation}\label{2005142101}
\bigcup_{C\in\c{C}}C\subseteq\bigcup_n\Delta^1_{\gamma,\c{C}}(C^{1}_n)
\end{equation}
and if $\gamma(\cup_{C\in\c{C}}C)=\infty$ then either $\inf_{n}\gamma(C^{1}_n)>0$ or the inclusion (\ref{2005142101}) is satisfied.}
\end{lemma}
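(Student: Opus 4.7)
The plan is a standard greedy exhaustion, with condition (V4) playing the role that a measure-theoretic tail bound usually plays in the classical Vitali argument. Write $M:=\sup_{C\in\c{C}}\gamma(C)\in(0,\infty)$. Put $\c{C}^{(0)}:=\c{C}$, and as long as $\c{C}^{(n-1)}\neq\emptyset$, let $M_{n-1}:=\sup_{C\in\c{C}^{(n-1)}}\gamma(C)$, choose (using $\lambda>1$) a set $C^\lambda_n\in\c{C}^{(n-1)}$ with $\gamma(C^\lambda_n)\geq M_{n-1}/\lambda$, and put $\c{C}^{(n)}:=\{C\in\c{C}^{(n-1)}:C\cap C^\lambda_n=\emptyset\}$. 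By construction $(C^\lambda_n)_n$ is pairwise disjoint and $(M_n)_n$ is nonincreasing. The core observation is that if $C\in\c{C}$ meets some $C^\lambda_n$ and $n^*$ is the least such index, then $C\in\c{C}^{(n^*-1)}$, giving $\gamma(C)\leq M_{n^*-1}\leq\lambda\gamma(C^\lambda_{n^*})$; hence $C$ appears in the union (\ref{2005142102}) defining $\Delta^\lambda_{\gamma,\c{C}}(C^\lambda_{n^*})$, and $C\subseteq\Delta^\lambda_{\gamma,\c{C}}(C^\lambda_{n^*})$.

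If the procedure terminates at some stage $N$ with $\c{C}^{(N)}=\emptyset$, then every $C\in\c{C}$ must meet some $C^\lambda_n$, and the inclusion (\ref{2005142100}) follows at once, independently of $\gamma(\cup_{C\in\c{C}}C)$. Otherwise the construction produces an infinite sequence, and here I would invoke (V4). By union-closedness, $\cup_n C^\lambda_n\in\c{A}$, and monotonicity of $\gamma$ yields $\gamma(\cup_n C^\lambda_n)\leq\gamma(\cup_{C\in\c{C}}C)$. When the latter is finite, (V4) applied to the pairwise disjoint family $(C^\lambda_n)\subseteq\c{B}$ forbids $\inf_n\gamma(C^\lambda_n)>0$; the sandwich $M_{n-1}/\lambda\leq\gamma(C^\lambda_n)\leq M_{n-1}$ together with the monotonicity of $(M_n)$ then forces $M_n\to 0$. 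No $C\in\c{C}$ can lie in $\cap_n\c{C}^{(n)}$ (else $\gamma(C)\leq M_n\to 0$, contradicting $\gamma(C)>0$), so every $C$ meets some $C^\lambda_n$ and the covering step above delivers (\ref{2005142100}). In the regime $\gamma(\cup_{C\in\c{C}}C)=\infty$ no such control of $\inf_n\gamma(C^\lambda_n)$ is available, but the statement already records the dichotomy: either this infimum is positive, or $M_n\to 0$ and the inclusion holds by the same argument.

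For the $\lambda=1$ claim, the hypothesis that $\{\gamma(G):G\in\c{G}\}$ attains its maximum for every $\c{G}\subseteq\c{C}$ lets me replace the $\sup$ at each stage by an attained $\max$: pick $C^1_n\in\c{C}^{(n-1)}$ with $\gamma(C^1_n)=M_{n-1}$. Every estimate then carries over with $\lambda$ set to $1$. I expect the main delicate point to be the implication from (V4) to $\inf_n\gamma(C^\lambda_n)=0$, and its promotion to $M_n\to 0$ via the two-sided bound on $\gamma(C^\lambda_n)$; the remainder is routine bookkeeping around the greedy construction.
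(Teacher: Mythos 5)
Your proposal is correct and follows essentially the same route as the paper's proof: the same greedy selection of a $\lambda^{-1}$-nearly-maximal set at each stage, the same use of (V3)--(V4) to force the residual suprema $M_n$ (the paper's $r_n$) to zero when $\gamma(\cup_{C\in\c{C}}C)<\infty$, and the same least-index argument to place each $C$ inside some $\Delta^\lambda_{\gamma,\c{C}}(C^\lambda_{n^*})$. If anything, you make explicit the appeal to union-closedness and monotonicity that the paper leaves implicit in the phrase ``from the assumptions on $\gamma$.''
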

\begin{proof}
Let $\lambda>1$. By induction we choose the desired sequence $(C^{\lambda}_n)_n$. Let
$$r_0:=\sup\{\gamma(C):C\in\c{C}\}<\infty.$$
We choose $C^\lambda_1\in\c{C}$ such that $\gamma(C^\lambda_1)\geq\lambda^{-1}r_0$. Let
$$r_1:=\sup\big\{\gamma(C):C\in\c{C},C\cap C^\lambda_1=\emptyset\big\}.$$
If $r_1=0$ we have done our job: Indeed in this situation for any $C\in\c{C}$ we have $C\cap C^\lambda_1\neq\emptyset$. And
on the other hand $\gamma(C)\leq r_0\leq\lambda\gamma(C^\lambda_1)$. Thus by (\ref{2005142102}),
$C\subseteq\Delta^\lambda_{\gamma,\c{C}}(C^\lambda_1)$, and hence
$$\bigcup_{C\in\c{C}}C\subseteq\Delta^\lambda_{\gamma,\c{C}}(C^\lambda_1).$$
If $r_1\neq0$ we choose $C^\lambda_2\in\c{C}$ such that $C^\lambda_1\cap C^\lambda_2=\emptyset$ and $\gamma(C^\lambda_2)\geq\lambda^{-1}r_1$.
So suppose that we have chosen $C^\lambda_1,\ldots,C^\lambda_{i-1}$ for some $i\geq3$. We let
$$r_{i-1}:=\sup\big\{\gamma(C):C\in\c{C},C\cap(\cup_{k=1}^{i-1}C^\lambda_k)=\emptyset\big\}.$$
If $r_{i-1}=0$ we have $\cup_{C\in\c{C}}C\subseteq\cup_{k=1}^{i-1}\Delta^\lambda_{\gamma,\c{C}}(C_k)$. Otherwise we choose $C^\lambda_i$
in the way similar to $C^\lambda_1,\ldots,C^\lambda_{i-1}$, and proceed analogously.

Suppose that the procedure of choosing $C^\lambda_n$ never stops. Thus we find the sequence $(C^\lambda_n)_{n\geq1}$ of pairwise disjoint
members of $\c{C}$. Suppose that $\gamma(\cup_{C\in\c{C}}C)<\infty$.
From the assumptions on $\gamma$ it is concluded that $\inf_n r_n=0$.
Let $C$ be an arbitrary member of $\c{C}$. Then for some $i$ we must have $r_i<\gamma(C)$.
This implies that $C\cap(\cup_{k=1}^{i}C^\lambda_k)\neq\emptyset$. Then a similar argument as above shows that
$C\subseteq\cup_{k=1}^{i}\Delta(C^\lambda_k)$. Thus (\ref{2005142100}) is satisfied.

The proof of the last part is similar: Just let in the above proof $\lambda=1$ and replace $\sup$ with $\max$.
\end{proof}
\begin{example}
\emph{In the following situations the assumptions (V3) and (V4) are satisfied
\begin{enumerate}
\item[(i)] $\c{A}=\c{P}(X)$, $\gamma$ is an outer measure on $X$ and $\c{B}$ is the family of all Carath\'{e}odory measurable subsets of $X$.
\item[(ii)] $X$ is a metric space with the property that the union of every infinite
family of pairwise disjoint balls of a fixed radius is unbounded,
$\c{A}=\c{P}(X)$, $\gamma=\r{diam}$ associates to any subset of $X$ its diameter, and $\c{B}$ is the family of all balls in $X$.
Examples of such a metric space are: any compact metric space, any nonpositive constant sectional curvature form space
$\bb{M}_\delta$ \cite[Section III.4]{Chavel1} (i.e. standard Euclidean and hyperbolic spaces),
and any two dimensional complete Riemannian manifold of positive nonnegative sectional curvature (see \cite[Theorem A]{CrokeKarcher1}).
\item[(iii)] $X$ and $\c{A}$ as in (ii), $\gamma(A)=\sup_{x\in A,y\in F}d(x,y)$ where $F$ is an arbitrary fixed subset of $X$, and
$\c{B}$ is the family of all balls with radii bigger than a fixed positive number.
\end{enumerate}}
\end{example}
We still need another simple covering lemma:
\begin{lemma}\label{2005150803}
\emph{Let $X$ be a nonempty set. Let $\c{C}\subseteq\c{P}(X)$ be such that for every $C,C'\in\c{C}$ we have
$\big(C\cap C'\big)\in\big\{\emptyset,C,C'\big\}$, and such that for every $C\in\c{C}$ there is a maximal member
(with respect to inclusion) of $\c{C}$ containing $C$. Then there is a subfamily $\c{D}\subseteq\c{C}$ such that the members
of $\c{D}$ are pairwise disjoint and
$$\bigcup_{C\in\c{C}}C=\bigcup_{D\in\c{D}}D.$$}
\end{lemma}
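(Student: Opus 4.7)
The natural candidate for $\mathcal{D}$ is the family of all maximal members of $\mathcal{C}$ with respect to inclusion, and my plan is simply to verify that this choice works. Concretely, I set
$$\mathcal{D}:=\{D\in\mathcal{C}: D\text{ is maximal in }\mathcal{C}\text{ with respect to inclusion}\}.$$
Once this is defined, the proof splits into the two obvious assertions: pairwise disjointness of the members of $\mathcal{D}$, and equality of the two unions.

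For pairwise disjointness, I would pick two distinct $D_1,D_2\in\mathcal{D}$ and suppose toward a contradiction that $D_1\cap D_2\neq\emptyset$. By the laminar hypothesis $D_1\cap D_2\in\{\emptyset,D_1,D_2\}$, so either $D_1\subseteq D_2$ or $D_2\subseteq D_1$. In either case one of the $D_i$ is strictly contained in the other (they are distinct), which contradicts its maximality. Hence $D_1\cap D_2=\emptyset$.

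For the union equality, the inclusion $\bigcup_{D\in\mathcal{D}}D\subseteq\bigcup_{C\in\mathcal{C}}C$ is immediate from $\mathcal{D}\subseteq\mathcal{C}$. Conversely, for any $C\in\mathcal{C}$ the second hypothesis supplies a maximal $D\in\mathcal{C}$ with $C\subseteq D$, i.e.\ $D\in\mathcal{D}$, whence $C\subseteq\bigcup_{D\in\mathcal{D}}D$. Taking the union over $C\in\mathcal{C}$ yields the reverse inclusion.

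There is essentially no technical obstacle here: the statement is a direct bookkeeping exercise once one recognises that a family satisfying the first hypothesis is what is elsewhere called a \emph{laminar} family, and that in such a family the maximal elements automatically form an antichain. The only genuine assumption being used non-trivially is the second one, which is precisely what guarantees that every element of $\mathcal{C}$ is covered by some maximal element; without it, one would need a Zorn's lemma argument or some chain condition to produce the maximal elements.
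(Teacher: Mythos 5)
Your proposal is correct and takes exactly the same approach as the paper, which simply declares $\c{D}$ to be the subfamily of all maximal members of $\c{C}$; you have merely written out the short verification that the paper leaves implicit.
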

\begin{proof}
$\c{D}$ may be considered to be the subfamily of all maximal members of $\c{C}$.
\end{proof}
As an application of simple Lemma \ref{2005150803} we consider the following result. Another application will be given in the proof
of Theorem \ref{2005150805}. By a left-closed right-open cube in $\bb{R}^n$ we mean a cube which is the cartesian product of the intervals of
the form $[a,b)$. For a cube $Q$ we denote by $L_Q$ the side length of $Q$. A \emph{dyadic} cube is a cube of the form $2^km+2^\ell[0,1)^{n}$
where $m\in\bb{Z}^n$ and $k,\ell\in\bb{Z}$. Note that for every two dyadic cubes $Q,Q'$ we have
$$Q\cap Q'\in\Big\{Q,Q',\emptyset\Big\}.$$
\begin{theorem}
\emph{(\cite[Theorem I.3]{Stein2}) Let $F$ be a nonempty closed subset of $\bb{R}^n$. Then there is a family $\c{D}$ of pairwise disjoint
left-closed right-open cubes such that $\bb{R}^n\setminus F=\cup_{Q\in\c{D}}Q$ and for every $Q\in\c{D}$,
\begin{equation}\label{2005250712}
\sqrt{n}L_Q\leq\r{dist}(Q,F)\leq4\sqrt{n}L_Q.
\end{equation}}
\end{theorem}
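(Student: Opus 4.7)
The plan is to produce $\c{D}$ as the maximal elements of a suitable family of dyadic cubes via Lemma \ref{2005150803}. Define
$$\c{C}:=\Big\{Q:Q\text{ is a left-closed right-open dyadic cube with }\sqrt{n}L_Q\leq\r{dist}(Q,F)\leq4\sqrt{n}L_Q\Big\}.$$
Each $Q\in\c{C}$ is automatically contained in $\bb{R}^n\setminus F$ because $\r{dist}(Q,F)>0$, while $F\neq\emptyset$ makes $\r{dist}(\cdot,F)$ finite everywhere.

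The first main step is to verify $\bb{R}^n\setminus F\subseteq\cup_{Q\in\c{C}}Q$. For $x\notin F$, set $d(x):=\r{dist}(x,F)>0$ and choose the unique integer $k$ with $2^k\leq d(x)/(2\sqrt{n})<2^{k+1}$; let $Q$ be the (unique) dyadic cube of side length $2^k$ containing $x$. A short triangle-inequality argument, using $\r{diam}(Q)=\sqrt{n}L_Q\leq d(x)/2$, gives $d(y)\geq d(x)/2$ for every $y\in Q$, whence $\sqrt{n}L_Q\leq d(x)/2\leq\r{dist}(Q,F)\leq d(x)<4\sqrt{n}L_Q$, so that $Q\in\c{C}$ and $x\in Q$.

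The second step is to check the hypotheses of Lemma \ref{2005150803} for $\c{C}$. The dichotomy $Q\cap Q'\in\{Q,Q',\emptyset\}$ is the general dyadic property recalled in the paper; combined with the observation that $Q_1,Q_2\supseteq Q$ forces $Q_1\cap Q_2\supseteq Q\neq\emptyset$, it shows that the cubes of $\c{C}$ containing a fixed $Q\in\c{C}$ form a chain under inclusion. To get a maximal member above any given $Q\in\c{C}$, I would rule out an infinite strictly ascending chain $Q=Q_0\subsetneq Q_1\subsetneq\cdots$ in $\c{C}$: such a chain would force $L_{Q_j}\to\infty$ and hence $\r{dist}(Q_j,F)\geq\sqrt{n}L_{Q_j}\to\infty$, contradicting the monotonicity $\r{dist}(Q_j,F)\leq\r{dist}(Q_0,F)<\infty$. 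I expect this no-infinite-chain step to be the only delicate point; it leans crucially on the \emph{upper} bound $\r{dist}(Q,F)\leq4\sqrt{n}L_Q$ in the definition of $\c{C}$ and on $F\neq\emptyset$, without either of which arbitrarily large cubes could persist in $\c{C}$.

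Finally, Lemma \ref{2005150803} produces the desired pairwise disjoint subfamily $\c{D}\subseteq\c{C}$ (the maximal members) with $\cup_{Q\in\c{D}}Q=\cup_{Q\in\c{C}}Q=\bb{R}^n\setminus F$, and each $Q\in\c{D}$ inherits the sandwich (\ref{2005250712}) from membership in $\c{C}$, completing the construction.
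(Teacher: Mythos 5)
Your proposal is correct and follows essentially the same route as the paper: the same family $\c{C}$ of all dyadic cubes satisfying the sandwich (\ref{2005250712}), the same choice of an appropriate dyadic scale to show $\bb{R}^n\setminus F\subseteq\cup_{Q\in\c{C}}Q$ via the triangle inequality, and the same appeal to Lemma \ref{2005150803}. The only difference is that you explicitly verify the hypothesis that every cube of $\c{C}$ lies in a maximal member (ruling out an infinite ascending chain using the upper bound $\r{dist}(Q,F)\leq4\sqrt{n}L_Q$ and monotonicity of $\r{dist}(\cdot,F)$ under inclusion), a point the paper leaves as ``easily verified.''
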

\begin{proof}
Let $\c{C}$ denote the family of all dyadic cube satisfying in (\ref{2005250712}). It is easily verified that the conditions of
Lemma \ref{2005150803} are satisfied and hence there is a subfamily $\c{D}\subseteq\c{C}$ such that $\cup_{Q\in\c{C}}Q=\cup_{Q\in\c{D}}Q$.
Thus now it is enough to show that $\bb{R}^n\setminus F=\cup_{Q\in\c{C}}Q$. Let $x\in\bb{R}^n\setminus F$. Let $\ell\in\bb{Z}$ be the unique
integer satisfying
$$\sqrt{n}2^\ell<\r{dist}(x,F)\leq\sqrt{n}2^{\ell+1}.$$
Let $Q$ be a dyadic cube containing $x$ with $L_Q=2^{\ell-1}$. We have
$$\r{dist}(Q,F)\leq\r{dist}(x,F)\leq4\sqrt{n}L_Q.$$
Also for every $y\in Q$ we have
$$\sqrt{n}L_Q=\sqrt{n}2^{\ell-1}=\sqrt{n}2^{\ell}-\sqrt{n}2^{\ell-1}\leq|\r{dist}(x,F)-\r{dist}(x,y)|\leq\r{dist}(y,F).$$
Thus $\sqrt{n}L_Q\leq\r{dist}(Q,F)$. Hence $Q\in\c{C}$. The proof is complete.
\end{proof}
\section{Finiteness of $\s{H}^\f{Q}_\mu$, Its Bounds, and Some Examples}\label{2005140809}
Throughout this section we suppose the notations of Section \ref{2005142105} specially (HL1) and (HL2).
We begin by a lower bound of $\s{H}^\f{Q}_\mu$:
\begin{theorem}\label{2005150804}
\emph{Suppose $\f{Q}$ has the property that $\c{Q}_x=\{Q\in\c{Q}: x\in Q\}$ for every $x\in X$. Then $\s{H}^\f{Q}_\mu\geq1$.}
\end{theorem}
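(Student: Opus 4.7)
The plan is to test the inequality against a very simple set function $F$ supported on a single element of $\mathcal{Q}$, and read off the lower bound from the scaling.

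Fix any $Q_0 \in \mathcal{Q}$ (which exists and has $0 < \mu(Q_0) < \infty$ by (HL2)), and define $F:\mathcal{Q}\to[0,\infty]$ by
\[
F(Q_0) := \mu(Q_0), \qquad F(Q) := 0 \text{ for } Q \in \mathcal{Q}\setminus\{Q_0\}.
\]
First I would compute $\|F\|_\mathcal{C}$: any pairwise disjoint family $Q_1,\dots,Q_n \in \mathcal{Q}$ contributes $\mu(Q_0)$ if some $Q_i = Q_0$ and $0$ otherwise, so $\|F\|_\mathcal{Q} = \mu(Q_0)$.

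Next I would use the hypothesis $\mathcal{Q}_x = \{Q \in \mathcal{Q} : x \in Q\}$ to ensure that $Q_0 \in \mathcal{Q}_x$ for every $x \in Q_0$. Hence, for each such $x$,
\[
\mathbf{M}^{\mathfrak{Q}}_\mu F(x) \;\geq\; \frac{F(Q_0)}{\mu(Q_0)} \;=\; 1.
\]
Therefore, for any $r < 1$, the level set $\{x \in X : \mathbf{M}^{\mathfrak{Q}}_\mu F(x) > r\}$ contains $Q_0$, so by monotonicity of the outer measure $\mu$,
\[
\mu\{x \in X : \mathbf{M}^{\mathfrak{Q}}_\mu F(x) > r\} \;\geq\; \mu(Q_0) \;>\; 0.
\]

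Finally, plugging this into the defining inequality (\ref{2005072329}) applied to $F$ with constant $c = \mathcal{H}^{\mathfrak{Q}}_\mu$ yields
\[
\mu(Q_0) \;\leq\; \mathcal{H}^{\mathfrak{Q}}_\mu \, r^{-1} \, \|F\|_\mathcal{Q} \;=\; \mathcal{H}^{\mathfrak{Q}}_\mu \, r^{-1} \, \mu(Q_0),
\]
and since $\mu(Q_0) \in (0,\infty)$, we may cancel to obtain $r \leq \mathcal{H}^{\mathfrak{Q}}_\mu$ for every $r < 1$. Letting $r \to 1^-$ gives $\mathcal{H}^{\mathfrak{Q}}_\mu \geq 1$ (the case $\mathcal{H}^{\mathfrak{Q}}_\mu = \infty$ is trivially consistent with the claim). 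There is no real obstacle here; the only substantive point is recognizing that the hypothesis on $\mathcal{Q}_x$ is precisely what forces $\mathbf{M}^{\mathfrak{Q}}_\mu F \geq 1$ pointwise on $Q_0$, so that $\mu(Q_0)$ appears on \emph{both} sides of the tested inequality and the $\mu(Q_0)$-factor cancels to isolate the constant.
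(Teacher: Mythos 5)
Your proof is correct and is essentially identical to the paper's: both test the inequality on the set function supported at a single $Q_0$ with value $\mu(Q_0)$, use the hypothesis on $\mathcal{Q}_x$ to get $\mathbf{M}^{\mathfrak{Q}}_\mu F\geq 1$ on $Q_0$, and let $r\to 1^-$. The only cosmetic difference is that the paper notes the level set equals $Q_0$ exactly, whereas you only use the containment, which is all that is needed.
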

\begin{proof}
Suppose $Q\in\c{Q}$ be arbitrary and fixed and let the set function $F$ on $\c{Q}$ be defined by $F(Q)=\mu(Q)$ and $F(Q')=0$ for every
$Q'\in\c{Q}$ with $Q'\neq Q$. For every $\epsilon$ with $0<\epsilon<1$
$$\{x\in X:\b{M}^\f{Q}_\mu F(x)>1-\epsilon\}=Q.$$
On the other hand we have $\|F\|_\c{Q}=F(Q)$. Thus if $c>0$ satisfies (\ref{2005072329}) we must have $c\geq1$. The proof is complete.
\end{proof}
We have the following two elementary theorems.
\begin{theorem}\label{2005220800}
\emph{Let the data $\f{Q}',\c{Q}',\c{Q}'_x$ be similar to the data $\f{Q},\c{Q},\c{Q}_x$ with the same properties as in (HL1) and (HL2).
\begin{enumerate}
\item[(i)] If $\c{Q}_x\subseteq\c{Q}'_x$ for every $x\in X$ then $\s{H}^{\f{Q}}_\mu\leq\s{H}^{\f{Q}'}_\mu$.
\item[(ii)] Let $\f{Q}\dot{\cup}\f{Q}'$ denote the family $\{\c{Q}_x\cup\c{Q}'_x\}_x$ of families of subsets of $X$. Then
$$\max\big(\s{H}^{\f{Q}}_\mu,\s{H}^{\f{Q}'}_\mu\big)\leq\s{H}^{\f{Q}\dot{\cup}\f{Q}'}_\mu\leq\s{H}^{\f{Q}}_\mu+\s{H}^{\f{Q}'}_\mu.$$
\end{enumerate}}
\end{theorem}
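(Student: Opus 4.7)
To prove part (i), the plan is to lift an arbitrary $F:\c{Q}\to[0,\infty]$ to a set function on $\c{Q}'$ by zero extension and invoke the hypothesized inequality for the larger family. Concretely I would define $F':\c{Q}'\to[0,\infty]$ by $F'(Q):=F(Q)$ for $Q\in\c{Q}$ and $F'(Q):=0$ for $Q\in\c{Q}'\setminus\c{Q}$. Since $\c{Q}_x\subseteq\c{Q}'_x$ and $F'|_{\c{Q}}=F$, taking the supremum of $F'(Q)/\mu(Q)$ over the larger set $\c{Q}'_x$ yields $\b{M}^{\f{Q}'}_\mu F'(x)\geq\b{M}^{\f{Q}}_\mu F(x)$, so the corresponding superlevel sets are nested. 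Next I would check $\|F'\|_{\c{Q}'}=\|F\|_{\c{Q}}$: in any finite disjoint family in $\c{Q}'$ the members outside $\c{Q}$ contribute zero while the rest form a disjoint family in $\c{Q}$, and conversely every disjoint family in $\c{Q}$ is also one in $\c{Q}'$ with the same sum. Applying the inequality defining $\s{H}^{\f{Q}'}_\mu$ to $F'$ then gives $\mu\{\b{M}^{\f{Q}}_\mu F>r\}\leq\s{H}^{\f{Q}'}_\mu\, r^{-1}\|F\|_{\c{Q}}$ for every $r>0$, and since $F$ is arbitrary this produces the desired bound on $\s{H}^{\f{Q}}_\mu$.

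For part (ii), the left inequality is immediate from (i) applied to both inclusions $\f{Q}\subseteq\f{Q}\dot{\cup}\f{Q}'$ and $\f{Q}'\subseteq\f{Q}\dot{\cup}\f{Q}'$. For the right inequality I would take an arbitrary $F:\c{Q}\cup\c{Q}'\to[0,\infty]$ and use the pointwise identity
$$\b{M}^{\f{Q}\dot{\cup}\f{Q}'}_\mu F(x)=\max\Big(\b{M}^{\f{Q}}_\mu(F|_{\c{Q}})(x),\,\b{M}^{\f{Q}'}_\mu(F|_{\c{Q}'})(x)\Big),$$
which follows by splitting the supremum over $\c{Q}_x\cup\c{Q}'_x$ into its two natural pieces. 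Consequently the superlevel set $\{\b{M}^{\f{Q}\dot{\cup}\f{Q}'}_\mu F>r\}$ is contained in the union of the two corresponding superlevel sets for the restrictions, so subadditivity of the outer measure $\mu$ together with the trivial bounds $\|F|_{\c{Q}}\|_{\c{Q}}\leq\|F\|_{\c{Q}\cup\c{Q}'}$ and $\|F|_{\c{Q}'}\|_{\c{Q}'}\leq\|F\|_{\c{Q}\cup\c{Q}'}$ yields the claimed sum bound.

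The whole argument is a sequence of elementary verifications, so there is no serious obstacle; the only point that demands care is keeping straight how the norm $\|\cdot\|_{\c{Q}}$ transforms under the two natural operations used here, namely restriction of a set function to a subfamily and extension by zero to a superfamily, which is what makes both the monotonicity in (i) and the subadditivity in (ii) drop out cleanly with the sharp constants displayed in the statement.
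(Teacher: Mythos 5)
Your proof is correct and follows essentially the same route as the paper: for (ii) both arguments split the supremum over $\c{Q}_x\cup\c{Q}'_x$ into a pointwise maximum, observe that the superlevel set is the union of the two individual superlevel sets, and conclude by subadditivity of $\mu$, with the lower bound coming from (i). For (i), which the paper dismisses as ``easily verified,'' your zero-extension argument with the check that $\|F'\|_{\c{Q}'}=\|F\|_{\c{Q}}$ is a correct and appropriately careful way to fill in that step.
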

\begin{proof}
(i) is easily verified. It is clear that the value $\sup_{Q''\in\c{Q}_x\cup\c{Q}'_x}\frac{F(Q'')}{\mu(Q'')}$ is the maximum of
$\sup_{Q\in\c{Q}_x}\frac{F(Q)}{\mu(Q)}$ and $\sup_{Q'\in\c{Q}'_x}\frac{F(Q')}{\mu(Q')}$. Thus $\b{M}^{\f{Q}\dot{\cup}\f{Q}'}_\mu
=\max(\b{M}^{\f{Q}}_\mu,\b{M}^{\f{Q}'}_\mu)$ and hence
$$\big\{x:\b{M}^{\f{Q}\dot{\cup}\f{Q}'}_{\mu}F(x)>r\big\}=\big\{x:\b{M}^{\f{Q}}_{\mu}F(x)>r\big\}
\cup\big\{x:\b{M}^{\f{Q}'}_{\mu}F(x)>r\big\}.$$
Thus if $c>0$ satisfies (\ref{2005072329}) and $c'>0$ satisfies the analogue of (\ref{2005072329}) for $\f{Q}'$ then any constant $b$ with
$\max(c,c')\leq b\leq(c+c')$ must satisfies the analogue of (\ref{2005072329}) for $\f{Q}\dot{\cup}\f{Q}'$.
\end{proof}
\begin{theorem}
\emph{Let $\mu'$ be another outer measure on $X$ satisfying $0<\mu'(Q)<\infty$ for $Q\in\c{Q}$.
\begin{enumerate}
\item[(i)] If $\mu\leq\mu'$ then $\s{H}^{\f{Q}}_{\mu'}\leq\s{H}^{\f{Q}}_\mu$.
\item[(ii)] $\s{H}^{\f{Q}}_{\mu+\mu'}\leq\min\big(\s{H}^{\f{Q}}_\mu,\s{H}^{\f{Q}}_{\mu'}\big)$.
\item[(iii)] $\s{H}^{\f{Q}}_{s\mu}=s^{-1}\s{H}^{\f{Q}}_\mu$ for any real number $s>0$.
\end{enumerate}}
\end{theorem}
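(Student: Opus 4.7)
The plan is to handle each of the three parts by a direct manipulation of the defining inequality (\ref{2005072329}), exploiting in particular that $\|F\|_\c{Q}$ is independent of the outer measure. The overall strategy is to convert each statement about $\s{H}^\f{Q}$ into a statement about $F$ via a suitable substitution that preserves or controls both the maximal function and the norm.

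For part (iii) the basic observation is the pointwise scaling identity $\b{M}^\f{Q}_{s\mu}F(x)=s^{-1}\b{M}^\f{Q}_\mu F(x)$, which is immediate from the supremum definition of the maximal function. Consequently the super-level set at height $r$ for $s\mu$ coincides with the super-level set at height $sr$ for $\mu$, and the identity $(s\mu)(A)=s\mu(A)$ together with the $\mu$-inequality applied at level $sr$ converts the $(s\mu)$-bound into one of the form $c\,r^{-1}\|F\|_\c{Q}$. Careful bookkeeping of the scaling factors in $s$ and $(sr)^{-1}$ yields the stated identity, and the reverse inequality is obtained by applying the same argument with $s^{-1}$ in place of $s$.

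For part (i), assuming $\mu\leq\mu'$, the key substitution is $F^*(Q):=F(Q)\mu(Q)/\mu'(Q)$. Since each multiplier lies in $(0,1]$, one has $\|F^*\|_\c{Q}\leq\|F\|_\c{Q}$, and the identity $F^*(Q)/\mu(Q)=F(Q)/\mu'(Q)$ gives the pointwise equality $\b{M}^\f{Q}_\mu F^*=\b{M}^\f{Q}_{\mu'}F$, so the super-level sets of the two maximal functions coincide. Applying the $\mu$-inequality to $F^*$ at level $r$ therefore bounds the outer measure of $\{x:\b{M}^\f{Q}_{\mu'}F>r\}$ by $\s{H}^\f{Q}_\mu r^{-1}\|F\|_\c{Q}$ on the $\mu$-side. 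The main obstacle here is that the argument as stated produces the bound for $\mu$ of the super-level set rather than for $\mu'$; promoting it to a genuine $\mu'$-bound is the delicate step, and will likely require either a more refined substitution using the Carath\'eodory structure of $\mu'$ or an auxiliary covering extraction that directly controls the $\mu'$-measure of the super-level set from the disjointness used in $\|F\|_\c{Q}$.

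For part (ii), the plan is to derive it as a two-line consequence of part (i): since $\mu\leq\mu+\mu'$ and $\mu'\leq\mu+\mu'$, two applications of (i) yield both $\s{H}^\f{Q}_{\mu+\mu'}\leq\s{H}^\f{Q}_\mu$ and $\s{H}^\f{Q}_{\mu+\mu'}\leq\s{H}^\f{Q}_{\mu'}$, whence the minimum. Thus all the genuine work is concentrated in closing the $\mu\to\mu'$ gap in part (i).
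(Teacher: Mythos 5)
Your outline for (iii) is the right computation, but you have not carried it to the end: if you actually do the bookkeeping you will find that it refutes the stated identity rather than proving it. Writing $A_r:=\{x:\b{M}^{\f{Q}}_{s\mu}F(x)>r\}=\{x:\b{M}^{\f{Q}}_{\mu}F(x)>sr\}$, the $\mu$-inequality at level $sr$ gives
$(s\mu)(A_r)=s\,\mu(A_r)\le s\cdot\s{H}^{\f{Q}}_{\mu}\,(sr)^{-1}\|F\|_{\c{Q}}=\s{H}^{\f{Q}}_{\mu}\,r^{-1}\|F\|_{\c{Q}}$:
the factor $s$ from rescaling the outer measure exactly cancels the factor $s^{-1}$ from rescaling the level, so the same constant works and $\s{H}^{\f{Q}}_{s\mu}\le\s{H}^{\f{Q}}_{\mu}$; applying this with $s^{-1}$ in place of $s$ gives $\s{H}^{\f{Q}}_{s\mu}=\s{H}^{\f{Q}}_{\mu}$. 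That is incompatible with $\s{H}^{\f{Q}}_{s\mu}=s^{-1}\s{H}^{\f{Q}}_{\mu}$ unless $\s{H}^{\f{Q}}_{\mu}\in\{0,\infty\}$. The paper's own proof is only the assertion that (i) and (iii) are ``easily verified,'' so there is nothing there to reconcile with; trust your calculation and record the correct homogeneity $\s{H}^{\f{Q}}_{s\mu}=\s{H}^{\f{Q}}_{\mu}$ instead.

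For (i), the gap you flag at the end --- promoting the bound on $\mu\{x:\b{M}^{\f{Q}}_{\mu'}F(x)>r\}$ to a bound on $\mu'$ of the same set --- is not a technical nuisance but the whole problem, and it cannot be closed: statement (i) is false as stated. Take $X=\{a,b,c\}$, $\c{Q}_a=\{\{a,c\}\}$, $\c{Q}_b=\{\{b,c\}\}$, $\c{Q}_c=\{\{c\}\}$; let $\mu$ assign $1$ to every nonempty set, and let $\mu'$ agree with $\mu$ except that $\mu'(\{a,b\})=\mu'(X)=2$. Both are outer measures and $\mu\le\mu'$. For $F(\{a,c\})=F(\{b,c\})=1$, $F(\{c\})=0$ one has $\|F\|_{\c{Q}}=1$ (any two members of $\c{Q}$ meet at $c$, so only singleton subfamilies are disjoint), $\b{M}^{\f{Q}}_{\mu'}F\equiv1$ on $\{a,b\}$, and letting $r\uparrow1$ forces $c\ge\mu'(\{a,b\})=2$, whence $\s{H}^{\f{Q}}_{\mu'}\ge2$; on the other hand $\s{H}^{\f{Q}}_{\mu}\le1$, since every nonempty superlevel set has $\mu$-measure $1<r^{-1}\|F\|_{\c{Q}}$. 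Consequently your reduction of (ii) to (i) collapses as well, and (ii) itself fails: with $\mu_1=\mu$ and $\mu_2=\mu'$ as above and $F(\{a,c\})=F(\{b,c\})=2$, $F(\{c\})=0$ one gets $\s{H}^{\f{Q}}_{\mu_1+\mu_2}\ge 3/2>1\ge\min\big(\s{H}^{\f{Q}}_{\mu_1},\s{H}^{\f{Q}}_{\mu_2}\big)$. What your substitution $F^*(Q)=F(Q)\mu(Q)/\mu'(Q)$ does prove correctly is the mixed inequality $\mu\{x:\b{M}^{\f{Q}}_{\mu'}F(x)>r\}\le\s{H}^{\f{Q}}_{\mu}r^{-1}\|F\|_{\c{Q}}$, and that appears to be the most one can extract from the hypothesis $\mu\le\mu'$.
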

\begin{proof}
(i) and (iii) are easily verified. (ii) follows from (i) immediately.
\end{proof}
The following theorem gives an upper bound for $\s{H}^{\f{Q}}_{\mu}$ under some mild conditions.
\begin{theorem}\label{2005122144}
\emph{Besides (HL1) and (HL2) suppose that the following condition is satisfied.
\begin{enumerate}
\item[(HL3)] If for a sequence $(Q_n)_n$ of pairwise disjoint members of $\c{Q}$ we have $\inf_{n}{\mu}(Q_n)>0$ then
${\mu}(\cup_n Q_n)=\infty$. (Note that this condition is automatically satisfied if the members of $\c{Q}$ are
Carath\'{e}odory measurable with respect to $\mu$.)
\end{enumerate}
Suppose that the constant $c>0$ satisfies the following condition.
\begin{enumerate}
\item[(HL4)] There exists a constant $\lambda>1$ such that ${\mu}(\tilde{Q})\leq c{\mu}(Q)$ for every $Q\in\c{Q}$ where
$$\tilde{Q}:=\bigcup_{R\in\c{Q},R\cap Q\neq\emptyset,\mu(R)\leq\lambda\mu(Q)}{R}.$$
\end{enumerate}
Then we have $\s{H}^{\f{Q}}_{\mu}\leq c$.}
\end{theorem}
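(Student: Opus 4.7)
The plan is standard for weak-type $(1,1)$ arguments: I cover the level set $E_r$ by members of $\c{Q}$ on which $F/\mu$ is large, extract a pairwise disjoint subfamily using the abstract Vitali Lemma \ref{2004221705}, and then pay a multiplicative constant $c$ via (HL4) to replace each disjoint set by its $\lambda$-dilate.

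First I set up the covering. Fix $F:\c{Q}\to[0,\infty]$ and $r>0$, and assume $\|F\|_\c{Q}<\infty$ (otherwise (\ref{2005072329}) is trivial). For each $x\in E_r:=\{x:\b{M}^\f{Q}_\mu F(x)>r\}$ the definition of the maximal function supplies some $Q_x\in\c{Q}_x$ with $F(Q_x)>r\mu(Q_x)$. Putting $\c{C}:=\{Q_x:x\in E_r\}\subseteq\c{Q}$, I have $E_r\subseteq\bigcup_{C\in\c{C}}C$ by (HL1), and $0<\mu(C)<F(C)/r\leq\|F\|_\c{Q}/r<\infty$ for every $C\in\c{C}$.

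Next I invoke Lemma \ref{2004221705} with $\c{A}=\c{P}(X)$, $\gamma=\mu$ (monotone as an outer measure, so (V3) holds), $\c{B}=\c{Q}$ (so that (V4) is precisely (HL3)), and $\lambda$ the constant provided by (HL4). The lemma produces a pairwise disjoint sequence $(C_n)\subseteq\c{C}$. I then claim that the inclusion $\bigcup_{C\in\c{C}}C\subseteq\bigcup_n\Delta^\lambda_{\mu,\c{C}}(C_n)$ always holds in our setting: the only scenario in which Lemma \ref{2004221705} does not force it is $\mu(\bigcup_{C\in\c{C}}C)=\infty$ together with $\inf_n\mu(C_n)>0$, and this is impossible because the pairwise disjointness of $(C_n)$ in $\c{Q}$ forces $\sum_n F(C_n)\leq\|F\|_\c{Q}<\infty$, whereas $\sum_n F(C_n)\geq r\sum_n\mu(C_n)=\infty$ under that hypothesis if the sequence is infinite (and a finite sequence already delivers the inclusion via the inductive construction in the proof of the lemma).

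Finally, since $\c{C}\subseteq\c{Q}$, the dilate $\Delta^\lambda_{\mu,\c{C}}(C_n)$ is a subset of $\tilde{C}_n$ in the sense of (HL4), so monotonicity of $\mu$ combined with (HL4) gives $\mu(\Delta^\lambda_{\mu,\c{C}}(C_n))\leq c\mu(C_n)$. Using $r\mu(C_n)<F(C_n)$ and the pairwise disjointness to invoke the definition of $\|F\|_\c{Q}$,
\[ \mu(E_r)\leq\sum_n\mu\bigl(\Delta^\lambda_{\mu,\c{C}}(C_n)\bigr)\leq c\sum_n\mu(C_n)\leq\frac{c}{r}\sum_n F(C_n)\leq\frac{c}{r}\|F\|_\c{Q}, \]
which is (\ref{2005072329}) and gives $\s{H}^\f{Q}_\mu\leq c$. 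The only step requiring genuine care is the third-paragraph argument that the Vitali-type inclusion is actually available here; everything else is routine unwinding of the definitions of $\b{M}^\f{Q}_\mu F$, $\|F\|_\c{Q}$, $\tilde{Q}$, and the dilate $\Delta^\lambda_{\mu,\c{C}}$.
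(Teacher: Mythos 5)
Your proof is correct and follows essentially the same route as the paper's: cover the level set by sets $Q_x$ with $F(Q_x)>r\mu(Q_x)$, apply Lemma \ref{2004221705} with $\gamma=\mu$ to extract a disjoint subsequence whose $\lambda$-dilates cover, rule out the bad alternative via $\sum_n\mu(C_n)\leq r^{-1}\|F\|_\c{Q}$, and finish with (HL4). Your choice $\c{B}=\c{Q}$ (so that (V4) is exactly (HL3)) and your explicit handling of the finite-sequence case are in fact slightly more careful than the paper's own write-up, which takes $\c{B}=\c{P}(X)$ and leaves that edge case implicit.
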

\begin{proof}
Suppose that $F:\c{Q}\to[0,\infty]$ is a set function with $\|F\|_\c{Q}<\infty$. We must show that the inequality (\ref{2005072329}) is satisfied.
Let $r>0$ be arbitrary and fixed and let
$$A:=\{x\in X:\b{M}^{\f{Q}}_{\mu}F(x)>r\}.$$
For every $x\in A$ there exists $Q_x\in\c{Q}_x$ such that $r{\mu}(Q_x)\leq F(Q_x)\leq\|F\|_\c{Q}$. Thus we have
\begin{equation}\label{2005150801}
A\subseteq\bigcup_{x\in A}Q_x\hspace{10mm}\sup_{x\in A}\mu(Q_x)\leq r^{-1}\|F\|_\c{Q}<\infty
\end{equation}
Also if $(x_n)_n$ is a sequence in $A$ such that $Q_{x_i}\cap Q_{x_j}=\emptyset$ for $i\neq j$ then from the inequality
$$\sum_{i=1}^n\mu(Q_{x_i})\leq r^{-1}\sum_{i=1}^nF(Q_{x_i})\leq r^{-1}\|F\|_\c{Q}$$
we have that $\inf_{n}\mu(Q_{x_n})=0$. Thus it is concluded from Lemma \ref{2004221705}
by $\c{B}=\c{P}(X)$, $\gamma=\mu$, and $\c{C}=\{Q_x\}_{x\in A}$ that there is a (finite or infinite) countable sequence
$(x_n)_n$ in $A$ such that $Q_{x_i}\cap Q_{x_j}=\emptyset$ for $i\neq j$ and such that $\cup_{x\in A}Q_x\subseteq\cup_{n}\tilde{Q}_{x_n}$. We have
$$\mu(A)\leq\mu(\cup_{n}\tilde{Q}_{x_n})\leq\sum_n\mu(\tilde{Q}_{x_n})\leq c\sum_n\mu(Q_{x_n})\leq cr^{-1}\sum_nF(Q_{x_n})\leq cr^{-1}\|F\|_\c{Q}.$$
The proof is complete.
\end{proof}
Note that the condition (HL4) is only explicitly depend on $\c{Q}$ not on $\f{Q}$. Thus if $c$ satisfies (HL4) and we replace $\c{Q}_x$ with another
family $\c{Q}'_x$ of sets containing $x$ and just we have $\cup_{x\in X}\c{Q}'_x=\c{Q}$ then $c$ is still satisfies (HL4).

Recall that a quasimetric space is a set $X$ together with a quasimetric $d:X\times X\to[0,\infty)$ satisfying
$d(x,y)=0$ iff $x=y$, $d(x,y)=d(y,x)$ and
\begin{equation}\label{2005122216}
d(x,z)\leq K\big(d(x,y)+d(y,z)\big)
\end{equation}
for some finite constant $K\geq1$. A quasimetric space has a canonical topology whose its base is given by the set of all open balls with respect
to the quasimetric $d$. A quasimetric space $X$ endowed with a Borel measure $\mu$ is called of homogeneous type if there exist finite constants
$\alpha,\beta>1$ such that for every every open ball $B$
of $X$ we have $0<\mu(B)<\infty$ and
\begin{equation}\label{2005122217}
\mu(\alpha B)\leq\beta\mu(B)
\end{equation}
Euclidean spaces with Euclidean distance and Lebesgue measure, compact Riemannian manifolds, Heisenberg group as a Riemannian manifold
with left- or right-invariant metric are some examples of spaces of homogeneous type. For more examples see
\cite{CoifmanWeiss1,FollandStein1,SteinMurphy1,Wallach1}. In the following result we prove a type of well-known maximal function inequalities
(see the mentioned references) for homogeneous spaces as a corollary of Theorem \ref{2005122144}.
(For a Borel measure $\mu$ on a topological space $X$ we denote by the same symbol $\mu$ the canonical outer measure induced by $\mu$ on $X$.)
\begin{theorem}\label{2005150800}
\emph{Let $(X,d,\mu)$ be a space of homogeneous type. Let $\r{ball}$ denote the family $\f{Q}$ where for every $x\in X$, $\c{Q}_x$
be the set of all open balls of $X$ with center $x$ (respectively, the set of all balls of $X$ containing $x$).
Then we have $\s{H}^\r{ball}_\mu\leq\beta^m$ where $m$ is the smallest natural number with $\alpha^m\geq2K(4K^2+1)$ and where
$K,\alpha,\beta$ are as in (\ref{2005122216}) and (\ref{2005122217}). In particular for every function $f:X\to\bb{C}$ in $\b{L}^1(\mu)$
if $\b{M}^\r{ball}_\mu f$ denotes the \emph{centered-ball maximal function} (respectively, \emph{ball maximal function})
of $f$ with respect to $\mu$ given by
$$\b{M}^\r{ball}_{\mu}f(x):=\sup_{Q\in\c{Q}_x}\frac{1}{\mu(Q)}\int_{Q}|f|\r{d}\mu\hspace{10mm}(x\in X)$$
then for every $r>0$ we have
\begin{equation}\label{2005130819}
\mu\{x:\b{M}^\r{ball}_{\mu}f(x)>r\}\leq r^{-1}\beta^m\|f\|_{\b{L}^1(\mu)}.
\end{equation}}
\end{theorem}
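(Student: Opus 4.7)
The plan is to apply Theorem \ref{2005122144} to the family $\c{Q}$ of all open balls of $X$. In either interpretation of $\c{Q}_x$ in the statement (balls centered at $x$ or balls containing $x$), the union $\c{Q}=\bigcup_x\c{Q}_x$ is the same --- it is the set of all open balls of $X$ --- and each of the hypotheses (HL1)--(HL4) depends only on $\c{Q}$, not on the fibration $\{\c{Q}_x\}$. Conditions (HL1) and (HL2) are contained in the definition of a space of homogeneous type, which stipulates $0<\mu(B)<\infty$ for every open ball. Condition (HL3) is automatic because open balls are Borel and hence Carath\'{e}odory $\mu$-measurable, so the parenthetical remark in Theorem \ref{2005122144} applies.

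The bulk of the work is to verify (HL4) with constant $c=\beta^m$, where $m$ is the smallest integer with $\alpha^m\geq 2K(4K^2+1)$. I fix $\lambda>1$ to be chosen appropriately and, for an arbitrary $Q=B(x,r)\in\c{Q}$, aim to show $\tilde Q\subseteq B(x,\alpha^m r)$. Two applications of the quasi-triangle inequality (\ref{2005122216}) give, for every $R=B(z,s)$ meeting $Q$, the estimate $d(x,z)\leq K(r+s)$ and consequently
$$R\subseteq B(x,K^2 r+K(K+1)s).$$
An arithmetic check shows that the right-hand side is contained in $B(x,2K(4K^2+1)r)$ as soon as $s$ is comparable to $r$ in the quantitative sense $s\leq(8K^3-K^2+2K)/(K^2+K)\cdot r$. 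The role of $\mu(R)\leq\lambda\mu(Q)$ is to force $s$ into this range. I exploit the symmetric containment $Q\subseteq B(z,(2K^2+K)\max(r,s))$ together with iterated doubling $\mu(B(z,\alpha^{\ell_0}s))\leq\beta^{\ell_0}\mu(R)$, where $\alpha^{\ell_0}\geq 2K^2+K$, to pin down the radius from the measure comparison. Once $\tilde Q\subseteq B(x,\alpha^m r)$ is established, iterating (\ref{2005122217}) $m$ times yields $\mu(\tilde Q)\leq\mu(B(x,\alpha^m r))\leq\beta^m\mu(Q)$, which is (HL4).

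I expect the subtlest step to be precisely the conversion from the measure comparison $\mu(R)\leq\lambda\mu(Q)$ to the radius comparison on $s$: the doubling condition is only an upper-regularity, and a priori $s$ can be much larger than $r$ without driving $\mu(R)$ above $\lambda\mu(Q)$. The remedy is the symmetric containment above, which forces $\mu(R)\geq\beta^{-\ell_0}\mu(Q)$ once $s>r$; then $\lambda$ is chosen so that the upper and lower bounds on $\mu(R)/\mu(Q)$ squeeze $s$ into the required range. Tracking the constants through this squeeze and through the quasi-triangle inequality produces exactly the factor $2K(4K^2+1)$; should this direct approach stall, an alternative is an internal application of Lemma \ref{2005102352} to the family $\c{R}=\{R\in\c{Q}:R\cap Q\neq\emptyset,\,\mu(R)\leq\lambda\mu(Q)\}$ with gauge equal to the radius, whose Vitali dilation factor of $K(4K+1)$ combines with the meeting factor $K(r+s)$ to give the same numerical constant.

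With (HL4) verified, Theorem \ref{2005122144} immediately gives $\s{H}^\r{ball}_\mu\leq\beta^m$. The weak-type statement (\ref{2005130819}) then drops out of the abstract inequality (\ref{2005072329}) by specializing to the set function $F(Q):=\int_Q|f|\,\r{d}\mu$ for $f\in\b{L}^1(\mu)$: for any pairwise disjoint $Q_1,\ldots,Q_n\in\c{Q}$,
$$\sum_{i=1}^n F(Q_i)=\int_{\cup_i Q_i}|f|\,\r{d}\mu\leq\|f\|_{\b{L}^1(\mu)},$$
so $\|F\|_\c{Q}\leq\|f\|_{\b{L}^1(\mu)}$. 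Since $\b{M}^\r{ball}_\mu f=\b{M}^\f{Q}_\mu F$ for this $F$, substituting the bound $\s{H}^\r{ball}_\mu\leq\beta^m$ into (\ref{2005072329}) turns it into (\ref{2005130819}).
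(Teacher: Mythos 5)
Your overall strategy --- reduce to Theorem \ref{2005122144}, observe that (HL3) and (HL4) depend only on $\c{Q}$ and not on the fibration $\{\c{Q}_x\}_x$ (so both readings of $\c{Q}_x$ are covered at once), and recover (\ref{2005130819}) by taking $F(Q)=\int_Q|f|\,\r{d}\mu$ --- is exactly the paper's, and those parts are fine. The gap is in your verification of (HL4). You aim to prove $\tilde Q\subseteq B(x,\alpha^m r)$ for $Q=B(x,r)$ by converting the hypothesis $\mu(R)\leq\lambda\mu(Q)$ into a radius bound $s\leq C_K\,r$ for every $R=B(z,s)$ meeting $Q$. That conversion is impossible in a general space of homogeneous type: the doubling inequality (\ref{2005122217}) is only an upper regularity condition, and nothing forbids $\mu(\alpha^j B)=\mu(B)$ for all $j$. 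Your proposed ``squeeze'' combines the upper bound $\mu(R)\leq\lambda\mu(Q)$ with the lower bound $\mu(R)\geq\beta^{-\ell_0}\mu(Q)$ (valid when $s\geq r$), but these two inequalities only constrain the ratio $\mu(R)/\mu(Q)$, which carries no information whatsoever about $s/r$. Concretely, in $X=[0,1]\cup[10,11]$ with the Euclidean metric and Lebesgue measure (a space of homogeneous type with $K=1$), a ball $R$ of radius about $9.4$ centred in $[0,1]$ has measure barely above $1$ yet reaches the far component; it meets a ball $Q\subseteq[0,1]$ with $\mu(Q)=0.6$ and satisfies $\mu(R)\leq 2\mu(Q)$, so $R$ contributes to $\tilde Q$ while $\alpha^mQ$ stays inside $[0,1]$ for any $m$ determined by $K$ alone. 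Hence $\tilde Q\not\subseteq B(x,\alpha^m r)$ in general. Your fallback via Lemma \ref{2005102352} with gauge equal to the radius founders on the same rock: that lemma needs the gauge to be bounded on the family, which is precisely the unestablished radius bound.

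The paper's fix is to dilate about a different ball. With $\lambda=\beta$, it sets $s:=\sup\{\r{r}_R:R\in\c{E}_Q\}$ where $\c{E}_Q=\{R\in\c{Q}:R\cap Q\neq\emptyset,\ \mu(R)\leq\beta\mu(Q)\}$, treats $s=\infty$ as a separate (easy) case in which $\tilde Q$ is an increasing union with $\mu(\tilde Q)\leq\beta\mu(Q)$, and otherwise picks $Q'\in\c{E}_Q$ with $\r{r}_{Q'}\geq s/2$. Every $R\in\c{E}_Q$ then has radius at most $s\leq 2\r{r}_{Q'}$ \emph{by construction}, so chaining the quasi-triangle inequality gives $d(\r{c}_R,\r{c}_{Q'})\leq 4K^2s$ and $\tilde Q\subseteq 2K(4K^2+1)Q'\subseteq\alpha^mQ'$; doubling is then applied to $Q'$, whose measure is controlled by $\mu(Q')\leq\beta\mu(Q)$ because $Q'\in\c{E}_Q$. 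In short, the radius comparison you need is obtained for free against the near-maximal ball $Q'$, never against $Q$ itself. Your argument would need to be restructured along these lines; as written, the central step of (HL4) fails.
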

\begin{proof}
Let $Q\in\c{Q}$ and let $\tilde{Q}$ be as in (HL4) for $\lambda=\beta$. Let
$$\c{E}_Q:=\{R\in\c{Q}: R\cap Q\neq\emptyset,\mu(R)\leq\beta\mu(Q)\big\},$$
and let $s:=\sup_{R\in\c{E}_Q}\r{r}_R$. Since $Q\in\c{E}_Q$, $\r{r}_Q\leq s$. If $s=\infty$ there is a sequence $(R_n)_n$
in $\c{E}_Q$ such that $R_n\subseteq R_{n+1}$ and $\r{r}_{R_n}\to\infty$. Then it is easily concluded that $\tilde{Q}=X$ and
$\mu(\tilde{Q})=\lim_{n\to\infty}\mu(R_n)$ and hence $\mu(\tilde{Q})\leq\beta\mu(Q)$. Suppose that $s\neq\infty$. For every $R\in\c{E}_Q$
$$d(\r{c}_Q,\r{c}_R)\leq K(\r{r}_Q+\r{r}_R)\leq2Ks.$$
Let $Q'\in\c{E}_Q$ be such that $\r{r}_{Q'}\geq2^{-1}s$. For every $R\in\c{E}_Q$, $d(\r{c}_{R},\r{c}_{Q'})\leq4K^2s$. Hence for every
$x\in\tilde{Q}$ we have $d(x,\r{c}_{Q'})\leq Ks(4K^2+1)\leq2\r{r}_{Q'}K(4K^2+1)$. This means that
$$\tilde{Q}\subseteq 2K(4K^2+1)Q'\subseteq\alpha^mQ'$$
where $m$ is as in the statement of the theorem. Now by (\ref{2005122217}) we have $\mu(\tilde{Q})\leq\beta^m\mu(Q)$. Thus the condition
of (HL4) is satisfied for $\lambda=\beta$ and $c=\beta^m$. Also (\ref{2005130819}) follows from (\ref{2005072329})
by $F(Q):=\int_{Q}|f|\r{d}\mu$. Note that we have $\|F\|_\c{Q}\leq\|f\|_{\b{L}^1(\mu)}$.
\end{proof}
By the notations of Theorem \ref{2005150800} it follows immediately from the theorem that
if $X=\bb{R}^n$ be the $n$-dimensional Euclidean space with standard
Euclidean metric and $\mu=\b{m}^n$ be the $n$-dimensional Lebesgue measure then $\s{H}^\r{ball}_{\b{m}^n}\leq2^{4n}$. (It has been computed by
$K=1,\alpha=2,\beta=2^n$.) If we consider $X=\bb{R}^n$ with the maximum distance $d$
(that is $d\big((x_i),(y_i)\big)=\max_{i=1}^n|x_i-y_i|$) then the balls with respect to $d$ are usual cubes in $\b{R}^n$
and also again we have $\s{H}^\r{cube}_{\b{m}^n}\leq2^{4n}$.
\begin{theorem}\label{2005150805}
\emph{Together with the assumptions (HL1) and (HL2) suppose that $\c{Q}$ satisfies the following five \emph{dyadical} conditions.
\begin{enumerate}
\item[(D1)] The members of $\c{Q}$ are Carath\'{e}odory measurable with respect to $\mu$.
\item[(D2)] $\c{Q}$ is countable.
\item[(D3)] $(Q\cap Q')\in\{Q,Q',\emptyset\}$ for every $Q,Q'\in\c{Q}$.
\item[(D4)] $\mu(Q)\lneq\mu(Q')$ for every $Q,Q'\in\c{Q}$ with $Q\subsetneq Q'$.
\item[(D5)] For every $\epsilon>0$ the set $\{\mu(Q):Q\in\c{Q},\mu(Q)>\epsilon\}$ has no limit point in $(0,\infty)$.
\end{enumerate}
Then we have $\s{H}^\f{Q}_\mu\leq1$.}
\end{theorem}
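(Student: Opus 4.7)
The plan is to reduce everything to finding, for a given set function $F$ with $\|F\|_\c{Q}<\infty$ and a given level $r>0$, a \emph{disjoint} subfamily of $\c{Q}$ that still covers the superlevel set $A:=\{x:\b{M}^\f{Q}_\mu F(x)>r\}$, and then to exploit Carath\'{e}odory measurability (D1) together with the definition of $\|\cdot\|_\c{Q}$. Concretely, for each $x\in A$ pick $Q_x\in\c{Q}_x$ with $F(Q_x)>r\mu(Q_x)$, and set $\c{C}:=\{Q\in\c{Q}:F(Q)>r\mu(Q)\}$, so $A\subseteq\bigcup_{Q\in\c{C}}Q$. Taking single-element collections in the norm gives the a priori bound $\mu(Q)<r^{-1}\|F\|_\c{Q}$ for every $Q\in\c{C}$.

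The heart of the argument is to apply Lemma \ref{2005150803} to $\c{C}$. Condition (D3) immediately gives $C\cap C'\in\{\emptyset,C,C'\}$ for $C,C'\in\c{C}$, so only the existence of a maximal member of $\c{C}$ above each $C\in\c{C}$ needs justification. For fixed $C\in\c{C}$, consider the set $S:=\{\mu(Q):Q\in\c{C},\,Q\supseteq C\}\subseteq[\mu(C),r^{-1}\|F\|_\c{Q}]$; by (D5) with $\epsilon=\mu(C)/2$ the set $S$ is discrete and bounded, hence finite, so $\max S$ exists. Any $Q^\ast\in\c{C}$ with $Q^\ast\supseteq C$ and $\mu(Q^\ast)=\max S$ is forced by (D4) to be unique and maximal in $\c{C}$ above $C$.

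Lemma \ref{2005150803} then yields a pairwise disjoint subfamily $\c{D}\subseteq\c{C}$ with $\bigcup_{C\in\c{C}}C=\bigcup_{D\in\c{D}}D$, and by (D2) $\c{D}$ is countable. Carath\'{e}odory measurability (D1) allows us to write
$$\mu(A)\leq\mu\Bigl(\bigcup_{D\in\c{D}}D\Bigr)=\sum_{D\in\c{D}}\mu(D).$$
For every finite truncation $D_1,\ldots,D_n$ of $\c{D}$ the disjointness gives $\sum_{i=1}^n\mu(D_i)\leq r^{-1}\sum_{i=1}^n F(D_i)\leq r^{-1}\|F\|_\c{Q}$ directly from the definition of the norm, and passing to the limit yields $\mu(A)\leq r^{-1}\|F\|_\c{Q}$, i.e.\ $\s{H}^\f{Q}_\mu\leq1$.

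The only nontrivial step is the second paragraph: verifying that the ``nested chain above $C$'' in $\c{C}$ actually has a top element. Conditions (D4) and (D5) are tailored precisely for this, but one has to notice that the a priori bound $\mu(Q)\leq r^{-1}\|F\|_\c{Q}$ is what lets (D5) bite and force finiteness; without it the chain could have measures accumulating at $+\infty$. Everything else is a routine combination of measurability, countable additivity, and the one-line bound $\sum_i F(D_i)\leq\|F\|_\c{Q}$.
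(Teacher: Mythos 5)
Your argument is correct and follows essentially the same route as the paper: reduce to Lemma \ref{2005150803} by using (D4) and (D5) (together with the a priori bound $\mu(Q)\leq r^{-1}\|F\|_{\c{Q}}$) to produce a maximal member of $\c{C}$ above each element, then conclude via (D1), (D2) and the definition of $\|F\|_{\c{Q}}$. The only differences are cosmetic — you take $\c{C}$ to be all sets with $F(Q)>r\mu(Q)$ rather than a selection $\{Q_x\}_{x\in A}$, and you spell out the finiteness-of-$S$ step that the paper leaves implicit.
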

\begin{proof}
Let $F,A,Q_x$ be as in the proof of Theorem \ref{2005122144}. So (\ref{2005150801}) is satisfied. It follows from (D4) and (D5) that
every member of $\c{C}:=\{Q_x\}_{x\in A}$ is contained in a maximal member of $\c{C}$. Thus by Lemma \ref{2005150803} and (D2)
there is a finite or countable sequence $(x_n)_n$ in $A$ such that $Q_n\cap Q_m=\emptyset$ for $n\neq m$
and such that $\cup_{x\in A}Q_x=\cup_{n}Q_{x_n}$. We have
$$\mu(A)\leq\mu(\cup_{x\in A}Q_x)=\mu(\cup_{n}Q_{x_n})=\sum_n\mu(Q_{x_n})\leq\sum_nr^{-1}F(Q_{x_n})\leq r^{-1}\|F\|_\c{Q}.$$
The proof is complete.
\end{proof}
\begin{corollary}
\emph{Let $\r{dydc}$ denote any family $\f{Q}$ where $\c{Q}$ is a family of dyadic cubes on Euclidean space $X=\bb{R}^n$
described for instance in \cite{LernerNazarov1}. Then we have
$$\s{H}^\r{dydc}_{\b{m}^n}=1.$$}
\end{corollary}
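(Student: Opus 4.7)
My plan is to deduce the corollary directly by combining the lower bound from Theorem \ref{2005150804} with the upper bound from Theorem \ref{2005150805}. Since a standard dyadic system $\c{Q}$ has the property that $\c{Q}_x = \{Q \in \c{Q} : x \in Q\}$ for every $x \in \bb{R}^n$, Theorem \ref{2005150804} immediately gives $\s{H}^\r{dydc}_{\b{m}^n} \geq 1$. So the entire work is in verifying that the five dyadical conditions (D1)--(D5) hold for the family of dyadic cubes with the Lebesgue outer measure, after which Theorem \ref{2005150805} will supply the matching upper bound $\s{H}^\r{dydc}_{\b{m}^n} \leq 1$.

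The verification is routine: (D1) holds because dyadic cubes are Borel sets, hence Lebesgue (Carath\'{e}odory) measurable; (D2) holds because dyadic cubes are parameterized by $\bb{Z}^n \times \bb{Z} \times \bb{Z}$, a countable index set; (D3) is the well-known nesting/disjointness property of dyadic cubes already recalled right before Theorem~10 in the excerpt; (D4) is clear because for dyadic cubes $Q \subsetneq Q'$ one has $L_Q < L_{Q'}$ (in fact $L_{Q'}/L_Q$ is a positive integer power of $2$), and the Lebesgue measure of a cube is the $n$-th power of its side length; (D5) holds because $\{\b{m}^n(Q) : Q \in \c{Q}\} \subseteq \{2^{nk} : k \in \bb{Z}\}$, a set whose only accumulation points in $[0,\infty]$ are $0$ and $\infty$, so after removing measures below any fixed $\epsilon>0$ no finite positive limit point can remain.

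The main (and essentially only) obstacle is that the notion of "dyadic cube" referenced via \cite{LernerNazarov1} is slightly more general than the concrete one defined just before Theorem~10; one must confirm that any such generalized dyadic lattice still enjoys (D1)--(D5), which it does by construction since the defining axioms of a dyadic lattice in that reference include measurability, countability, the nesting property, and a strict geometric scaling between parent and child cubes that forces (D4) and (D5). Once this is recorded, combining $\s{H}^\r{dydc}_{\b{m}^n} \geq 1$ from Theorem \ref{2005150804} with $\s{H}^\r{dydc}_{\b{m}^n} \leq 1$ from Theorem \ref{2005150805} closes the proof.
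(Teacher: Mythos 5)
Your proposal is correct and follows exactly the paper's route: the paper's entire proof is ``It follows directly from Theorems \ref{2005150804} and \ref{2005150805},'' i.e.\ the lower bound $\s{H}^\r{dydc}_{\b{m}^n}\geq1$ from Theorem \ref{2005150804} and the upper bound $\s{H}^\r{dydc}_{\b{m}^n}\leq1$ from Theorem \ref{2005150805}. Your explicit verification of (D1)--(D5) for dyadic lattices is the routine checking the paper leaves implicit, and it is accurate.
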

\begin{proof}
It follows directly from Theorems \ref{2005150804} and \ref{2005150805}.
\end{proof}
Now we find an upper bound for Hardy-Littlewood maximal constant of the family of admissible
trapezoid in (degree-) homogeneous trees. We sketchy recall some of the definitions and basic properties.
Our main references here are \cite{ArdittiTabaccoVallarino1,HebischSteger1}.
Let $\bb{T}$ be a \emph{homogeneous tree} of degree $q+1$ ($q\geq1$). Thus $\bb{T}$ is just a connected graph without loop
and such that any vertex of $\bb{T}$ has degree $q+1$. The canonical metric $d$ on $\bb{T}$ associates to every two vertexes the
length of the smallest path between them. Fix a (both-side) infinite geodesic $\f{g}$ in $\bb{T}$
(that is just a subgraph of $\bb{T}$ which itself is a homogeneous tree of degree $2$) and a (surjective one-to-one) mapping
$\r{N}:\f{g}\to\bb{Z}$ such that $d(x,y)=|\r{N}(x)-\r{N}(y)|$ for every $x,y\in\f{g}$. Then the \emph{level} $\ell:\bb{T}\to\bb{Z}$
is defined by $\ell(x):=\r{N}(x')-d(x',x)$ where $x'$ is the closest point of $\f{G}$ to $x$. The canonical weighted counting measure $\tau$
on $\bb{T}$ is defined by $\tau\{x\}:=q^{\ell(x)}$ for every $x\in\bb{T}$. To ignore the trivial case we suppose that $q\geq2$.
An \emph{admissible trapezoid} is a subset $T\subseteq\bb{T}$ such that there are $x_T\in\bb{T}$ and $h_T\geq1$ with
$$T=\big\{x\in\bb{T}:\ell(x)=\ell(x_T)-d(x_T,x),h_T\leq\ell(x_T)-\ell(x)<2h_T-1\big\}.$$
$h_T$ is called the \emph{height} of $T$ and the $\omega(T):=q^{\ell(x_T)}$ is called the \emph{width} of $T$.
Also any singleton $T=\{x_T\}$ is called an admissible trapezoid with height $h_T:=1$. It is not hard to see that for any admissible
trapezoid $T$ we have
\begin{equation}\label{2005160804}
\tau(T)=h_T\omega(T)
\end{equation}
For any admissible trapezoid $T$ the subset
$$\tilde{T}:=\big\{x\in\bb{T}:\ell(x)=\ell(x_T)-d(x_T,x),2^{-1}h_T\leq\ell(x_T)-\ell(x)<4h_T\big\}$$
is called the \emph{envelope} of $T$. It is proved that \cite[Proposition 2]{ArdittiTabaccoVallarino1}
\begin{equation}\label{2005160805}
\tau(\tilde{T})\leq4\tau(T).
\end{equation}
Also we know that \cite[Proposition 3]{ArdittiTabaccoVallarino1} for admissible trapezoids $T_1,T_2$ if $T_1\cap T_2\neq\emptyset$
and if $\omega(T_1)\leq\omega(T_2)$ then $T_1\subseteq\tilde{T}_2$. Let $\c{T}$ denote the family of all admissible trapezoids in $\bb{T}$
and let $\r{trpzd}$ denote the family $\{\c{T}_x\}_{x\in\bb{T}}$ where $\c{T}_x$ is the family of all trapezoids containing $x$
(resp. the family of all trapezoids $T$ with $x_T=x$).
\begin{theorem}\label{2005220801}
\emph{For every homogeneous tree we have $\s{H}^\r{trpzd}_\tau\leq4$.}
\end{theorem}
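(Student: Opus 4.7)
The plan is to follow the scheme of the proof of Theorem \ref{2005122144}, but to replace the appeal to Lemma \ref{2004221705} by the first Abstract Vitali Covering Lemma (Lemma \ref{2005102352}) in its form (V2). The crucial observation is that in the tree setting the engulfment property of admissible trapezoids is naturally phrased in terms of the \emph{width} $\omega$, not the measure $\tau$, so the width is the right choice of gauge $\gamma$ and the envelope operator $T\mapsto\tilde T$ is the right choice of dilating map $\Delta$. The two results \cite[Propositions 2, 3]{ArdittiTabaccoVallarino1} recalled above the theorem, namely the measure bound (\ref{2005160805}) and the inclusion $T_1\subseteq\tilde T_2$ under $\omega(T_1)\leq\omega(T_2)$, furnish exactly the two geometric ingredients the strategy needs.

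More precisely, I would assume $\|F\|_\c{Q}<\infty$ (else (\ref{2005072329}) is vacuous), fix $r>0$, and set $A:=\{x\in\bb{T}:\b{M}^\r{trpzd}_\tau F(x)>r\}$. For each $x\in A$ pick an admissible trapezoid $T_x\in\c{Q}_x$ with $r\tau(T_x)\leq F(T_x)$, so that $A\subseteq\cup_{x\in A}T_x$. Since $F(T_x)\leq\|F\|_\c{Q}$, we have $\tau(T_x)\leq r^{-1}\|F\|_\c{Q}$, and then the identity (\ref{2005160804}) together with $h_{T_x}\geq 1$ gives $\omega(T_x)\leq\tau(T_x)\leq r^{-1}\|F\|_\c{Q}$, showing that the widths are uniformly bounded above on the family $\c{C}:=\{T_x\}_{x\in A}$.

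Next I would verify (V2) for $\c{C}$ with $\gamma=\omega$ and $\Delta(T)=\tilde T$. The width takes values in the discrete set $\{q^\ell:\ell\in\bb{Z}\}$, any bounded subset of which attains its supremum, so for every subfamily $\c{G}\subseteq\c{C}$ the maximum of $\{\omega(T):T\in\c{G}\}$ exists. The second clause of (V2)---if $T_1\cap T_2\neq\emptyset$ and $\omega(T_1)\leq\omega(T_2)$ then $T_1\subseteq\tilde T_2$---is precisely \cite[Proposition 3]{ArdittiTabaccoVallarino1}. Lemma \ref{2005102352} therefore delivers a pairwise disjoint subfamily $\c{D}\subseteq\c{C}$ with $\cup_{x\in A}T_x\subseteq\cup_{D\in\c{D}}\tilde D$.

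To finish, each $D\in\c{D}$ satisfies $F(D)\geq r\tau(D)>0$, and since any finite disjoint subfamily of $\c{D}$ has $F$-sum at most $\|F\|_\c{Q}$, the family $\c{D}$ is countable. Using countable subadditivity of $\tau$ followed by (\ref{2005160805}),
$$\tau(A)\leq\tau\Big(\bigcup_{D\in\c{D}}\tilde D\Big)\leq\sum_{D\in\c{D}}\tau(\tilde D)\leq 4\sum_{D\in\c{D}}\tau(D)\leq 4r^{-1}\sum_{D\in\c{D}}F(D)\leq 4r^{-1}\|F\|_\c{Q},$$
yielding $\s{H}^\r{trpzd}_\tau\leq 4$. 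No real technical obstacle is present; the main conceptual step is to recognise that the gauge must be the width and not the measure, so that the engulfment (\cite[Proposition 3]{ArdittiTabaccoVallarino1}) plugs directly into (V2). Trying instead to verify condition (HL4) of Theorem \ref{2005122144} would be awkward, since a trapezoid $R$ with $\tau(R)\leq\lambda\tau(T)$ and $R\cap T\neq\emptyset$ may well have $\omega(R)>\omega(T)$ (a tall thin $T$ paired with a short wide $R$), and such an $R$ need not lie inside $\tilde T$.
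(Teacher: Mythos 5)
Your proposal is correct and follows essentially the same route as the paper's proof: both apply Lemma \ref{2005102352} under (V2) with gauge $\gamma=\omega$ and dilation $\Delta(T)=\tilde T$, using \cite[Propositions 2, 3]{ArdittiTabaccoVallarino1} exactly as you do, and then conclude by subadditivity and (\ref{2005160805}). The only cosmetic difference is that the paper gets countability of the disjoint subfamily from the countability of $\c{T}$ itself, whereas you derive it from the positivity of $F$ on the selected trapezoids and the bound $\|F\|_\c{Q}<\infty$; both are valid.
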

\begin{proof}
For any $F:\c{T}\to[0,\infty]$ with $\|F\|_\c{T}<\infty$ let $A:=\{x:\b{M}^\r{trpzd}_\tau F(x)>r\}$. Then as in the proof of
Theorem \ref{2005122144} there exists a subfamily $\c{C}\subseteq\c{T}$ such that
$$A\subseteq\bigcup_{T\in\c{C}}T\hspace{10mm}\tau(T)\leq r^{-1}F(T)\leq r^{-1}\|F\|_\c{T}\hspace{5mm}(T\in\c{C})$$
Thus by (\ref{2005160804}) we have $\sup_{T\in\c{C}}\omega(T)<\infty$. It follows from this boundedness, the discreteness of $\omega$,
and the above mentioned fact, that the assumptions of (V2) in Lemma \ref{2005102352} are satisfied where $\gamma=\omega$ and $\Delta(T)=\tilde{T}$.
Thus it follows from the lemma and the countability of $\c{T}$ that there is a finite or countable pairwise disjoint sequence $(T_n)_n$
in $\c{C}$ such that $\cup_{T\in\c{C}}T\subseteq\cup_n\tilde{T}_n$. Now by (\ref{2005160805}) we have
$$\tau(A)\leq\tau(\cup_{T\in\c{C}}T)\leq\tau(\cup_n\tilde{T}_n)\leq\sum_n\tau(\tilde{T}_n)\leq4\sum_n\tau(T_n)
\leq4r^{-1}\sum_nF(T_n)\leq4r^{-1}\|F\|_\c{T}$$
The proof is complete.
\end{proof}
The inequality (\ref{2005072329}) following Theorem \ref{2005220801} recovers the inequalities in \cite[Section 3]{ArdittiTabaccoVallarino1}.

As another computational example we now find an upper bound for Hardy-Littlewood maximal constant of the family of
Calder\'{o}n-Zygmund sets in $(ax+b)$-group.
Our main reference here is \cite{LiuVallarinoYang1}. Recall that the so-called $(ax+b)$-group $\bb{A}_n$ is a Lie group which
has as underlying space the Euclidean space $\bb{R}^{n+1}$ for $n\geq1$. The group operation on $\bb{A}_n$ is given by
$(a,t).(a', t')=(a+ e^ta', t+t')$ for $a,a'\in\bb{R}^n,t,t'\in\bb{R}$. The canonical left-invariant Riemannian metric on $\bb{A}_n$ is given by
$\r{d}s^2\equiv e^{-2t}(\r{d}a^2)+\r{d}t^2$. The right Haar measure on $\bb{A}_n$ coincides with the $(n+1)$-dimensional
Lebesgue measure $\b{m}^{n+1}$. Then $\bb{A}_n$ together with minimum geodesic distance induced by the mentioned metric and the measure
measure $\b{m}^{n+1}$ is the prototype of \emph{nonhomogeneous} spaces with \emph{exponential growth}. A \emph{Calder\'{o}n-Zygmund set}
in $\bb{A}_n$ is a subset $Z$ of the form $Q\times[t-r, t+r)$ where $Q$ is a (left-closed right-open) cube in $\bb{R}^n$ with the length side $L$
such that
\begin{equation}\label{2005201542}
e^2 e^tr\leq L<e^8 e^tr\quad\text{if}\quad r<1
\end{equation}
\begin{equation}\label{2005201543}
e^te^{2r}\leq L<e^t e^{8r}\quad\text{if}\quad r\geq1
\end{equation}
Note that $\b{m}^{n+1}(Z)=L^{n}2r$. We denote by $\c{Z}$ the set of all Calder\'{o}n-Zygmund setts in $\bb{A}_n$.
\begin{lemma}\label{2005160806}
\emph{Let $Z\in\c{Z}$. Then there exists a rectangle $Z^*\subset\bb{A}_n$ such that
$$\bigcup_{Z'\in\c{Z},Z'\cap Z\neq\emptyset,\b{m}^{n+1}(Z')\leq2\b{m}^{n+1}(Z)}Z'\quad\subseteq Z^*,
\quad\b{m}^{n+1}(Z^*)\leq4^{n+2}e^{24n}\b{m}^{n+1}(Z).$$}
\end{lemma}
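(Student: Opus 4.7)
The plan is to parameterize an arbitrary competitor as $Z' = Q' \times [t'-r', t'+r')$ with $L'$ the side length of the cube $Q'$, and to exploit the defining Calder\'{o}n-Zygmund inequalities (\ref{2005201542})--(\ref{2005201543}) for both $Z$ and $Z'$, together with the intersection and measure hypotheses, in order to bound the three scalar quantities $L'/L$, $r'$, and $|t-t'|$ by constants depending only on $n$. Once these are under control, I take $Z^* = Q^* \times I^*$ where $Q^* \subseteq \bb{R}^n$ is the cube concentric with $Q$ of side length $L + 2 L'_{\max}$ and $I^* \subseteq \bb{R}$ is the interval centered at $t$ of total length $2r + 4 r'_{\max}$. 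Then $\b{m}^{n+1}(Z^*) \leq (L + 2L'_{\max})^n \cdot (2r + 4 r'_{\max})$, and the lemma reduces to a purely numerical verification.

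First I would record the direct consequences of the hypotheses. The intersection $Z \cap Z' \neq \emptyset$ forces $|t-t'| < r + r'$ (so $e^{t'} \in [e^{-(r+r')}e^t,\, e^{r+r'}e^t]$) and $\|c_Q - c_{Q'}\|_\infty \leq (L+L')/2$; the measure inequality rewrites as $L'^n r' \leq 2 L^n r$; and (\ref{2005201542})--(\ref{2005201543}) furnish upper and lower envelopes for $L$ and $L'$ in terms of $(t,r)$ and $(t',r')$ respectively. Then I split into the four subcases according to whether $r$ and $r'$ each lie in $[0,1)$ or $[1,\infty)$. In each subcase the recipe is the same: substitute the CZ upper envelope on $L'$ into the identity $L'^{n+1} = L' \cdot L'^n \leq L' \cdot 2 L^n r / r'$, and then use the intersection bound on $e^{t'-t}$ and the CZ lower envelope on $L$ to collapse the right-hand side into $C_n \, L^{n+1}$ for an explicit $C_n$ that is a small power of $e^8$; a parallel manipulation of $L'^n r' \leq 2 L^n r$ yields the corresponding bound on $r'$ (or $r'/r$).

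The most delicate subcase is $r, r' \geq 1$, where two separate bounds must be combined: the measure inequality alone gives $L'/L \leq (2r/r')^{1/n}$, which blows up when $r'$ is small relative to $r$, while the CZ upper bound $L' \leq e^{t'} e^{8r'}$ combined with $t' \leq t+r+r'$ and $L \geq e^t e^{2r}$ gives $L'/L \leq e^{9r'-r}$, which blows up when $r'$ is large relative to $r$. The observation that $r' \leq r/9$ forces $e^{9r'-r} \leq 1$ while $r' \geq r/9$ forces $(2r/r')^{1/n} \leq 18^{1/n}$ shows $L'/L \leq 18^{1/n}$ uniformly in $r$; a parallel argument exploiting both $r < 1$ (the CZ hypothesis) and the measure bound controls the mixed subcase $r < 1 \leq r'$. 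Assembling the four subcase estimates yields $L'_{\max} \leq C_1 L$ and $r'_{\max} \leq C_2 \max(1,r)$ for absolute $C_1, C_2$ (small multiples of $e^8$), whence $\b{m}^{n+1}(Z^*) \leq (1+2C_1)^n (1+2C_2)\, \b{m}^{n+1}(Z)$, and the elementary check $(1+2C_1)^n (1+2C_2) \leq 4^{n+2} e^{24n}$ closes the proof. The main obstacle is not conceptual but combinatorial: keeping track of the slightly different constants across the four subcases and verifying that they all fit comfortably under the single target factor $4^{n+2} e^{24n}$; the algebra inside each subcase is elementary.
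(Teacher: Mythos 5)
Your proposal follows essentially the same route as the paper: the same parameterization of a competitor $Z'=Q'\times[t'-r',t'+r')$, the same four-case split according to whether $r$ and $r'$ are $<1$ or $\geq1$, the same key dichotomy $r'\leq r/9$ versus $r'\geq r/9$ in the case $r,r'\geq1$, and the same construction of $Z^*$ as a concentric enlarged cube times an enlarged interval about $t$. One imprecision to fix: the assembled bound $r'_{\max}\leq C_2\max(1,r)$ does not justify the final step $\b{m}^{n+1}(Z^*)\leq(1+2C_1)^n(1+2C_2)\,\b{m}^{n+1}(Z)$, which requires $r'\leq C_2\,r$; fortunately the stronger bound does hold in all four cases --- in the mixed case $r<1\leq r'$ the measure inequality combined with the lower CZ envelope $L'\geq e^{t'}e^{2r'}$, the upper envelope $L<e^8e^tr$, and $t'\geq t-r-r'$ forces $r'\leq 2e^{8n}r$ (the paper's $\lambda_2$, which is dimension-dependent rather than a ``small multiple of $e^8$'').
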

\begin{proof}
Let $Z'=Q'\times[t'-r',t'+r')$ be in $\c{Z}$ such that $Z\cap Z'\neq\emptyset$ and $\b{m}^{n+1}(Z')\leq2\b{m}^{n+1}(Z)$.
Also let $L'$ denote the side length of $Q'$.  Then
\begin{equation}\label{2005201544}
{L'}^n2r'\leq2L^n2r
\end{equation}
By $[t'-r',t'+r')\cap[t-r,t+r)\neq\emptyset$ we have
\begin{equation}\label{2005201545}
t-r-r'\leq t'.
\end{equation}
We have the following four possible situations:

(i) $r<1$ and $r'<1$. It follows from (\ref{2005201542}) and (\ref{2005201544}) that
$$e^{2n}e^{t'n}{r'}^n2r'\leq2e^{8n}e^{tn}{r}^n2r.$$
By (\ref{2005201545}) we have $t-2\leq t'$. Thus by the above inequality $r'^{(n+1)}\leq2e^{8n}r^{(n+1)}$ and hence
$r'\leq\lambda_1 r$ where $\lambda_1:=2^{\frac{1}{n+1}}e^{\frac{8n}{n+1}}$. By applying (\ref{2005201542}) two times
\begin{equation*}
L'\leq e^8e^{t'}r'\leq e^8e^te^2\lambda_1r\leq e^8\lambda_1L.
\end{equation*}
Thus $L'\leq\eta_1 L$ where $\eta_1:=e^8\lambda_1$.

(ii) $r<1$ and $r'\geq1$. By the methods as in (i) we find that $r'\leq\lambda_2 r$ where $\lambda_2:=2e^{8n}$. Also it follows directly from
(\ref{2005201544}) that $L'\leq\eta_2L$ where $\eta_2:=2^\frac{1}{n}$.

(iii) $r\geq1$ and $r'<1$. Then we have trivially $r'\leq\lambda_3r$ where $\lambda_3:=1$. Also it follows from  (\ref{2005201542}) and
(\ref{2005201543}) that $L'\leq\eta_3L$ where $\eta_3:=e^8$.

(iv) $r\geq1$ and $r'\geq1$. We have $r'<\lambda_4 r$ where $\lambda_4:=9$. If $r\leq r'$ by (\ref{2005201544}) we have
$L'\leq2^\frac{1}{n}L$. If $r'\geq\frac{r}{9}$ again by (\ref{2005201544}) we have $L'\leq18^\frac{1}{n}L$. If $r'\leq\frac{r}{9}$ then
by applying (\ref{2005201543}) two times we have
$$L'\leq e^{t'}e^{8r'}\leq e^te^re^{r'}e^{8r'}=e^te^{9r'}e^r\leq e^te^{2r}\leq L.$$
Thus $L'\leq\eta_4 L$ where $\eta_4:=18^\frac{1}{n}$.

Therefore we have $\lambda:=\max_{i=1}^4\lambda_i=2e^{8n}$ and $\eta:=\max_{i=1}^4\eta_i=e^82^{\frac{1}{n+1}}e^{\frac{8n}{n+1}}$.
Let $Q^*$ be the cube in $\bb{R}^n$ which its center coincides with the center of $Q$ and its side length is $(1+2\eta)L$.
Then since $Q'\cap Q\neq\emptyset$ we have $Q'\subseteq Q^*$. On the other hand since $[t'-r',t'+r')\cap[t-r,t+r)\neq\emptyset$ we have
$$[t'-r',t'+r')\subset[t-(1+2\lambda)r,t+(1+2\lambda)r).$$
Thus if we let
$$Z^*:=Q^*\times[t-(1+2\lambda)r,t+(1+2\lambda)r)$$
then $Z'\subseteq Z^*$. We also have
$$\b{m}^{n+1}(Z^*)=(1+2\eta)^nL^n2(1+2\lambda)r\leq4^{n+2}e^{24n}\b{m}^{n+1}(Z).$$
\end{proof}
Let $\r{CZ}$ denote the family $\{\c{Z}_x\}_{x\in\bb{A}_n}$ where $\c{Z}_x$ denotes the family of all Calder\'{o}n-Zygmund sets containing $x$.
\begin{theorem}
\emph{For the $(n+1)$-dimensional $(ax+b)$-group $\bb{A}_n$ we have
$$\s{H}^{\r{CZ}}_{\b{m}^{n+1}}\leq4^{n+2}e^{24n}.$$}
\end{theorem}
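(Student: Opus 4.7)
The plan is to deduce this theorem as a direct application of Theorem \ref{2005122144} to the data $(\mathrm{CZ},\mathbf{m}^{n+1})$, with the verification of the key hypothesis (HL4) already carried out in Lemma \ref{2005160806}.

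First, I would check that the setup (HL1) and (HL2) holds for $\mathrm{CZ}$: every $x\in\mathbb{A}_n$ lies in Calder\'{o}n-Zygmund sets of both small and large "radius" parameter $r$, so $\mathcal{Z}_x$ is nonempty; and by the explicit formula $\mathbf{m}^{n+1}(Z)=L^n\cdot 2r$ each Calder\'{o}n-Zygmund set has strictly positive and finite Lebesgue measure. Next, I would verify (HL3): a Calder\'{o}n-Zygmund set $Z=Q\times[t-r,t+r)$ is a Borel subset of $\mathbb{R}^{n+1}$, so it is Carath\'{e}odory measurable for the Lebesgue outer measure, which is exactly the parenthetical remark in Theorem \ref{2005122144} that makes (HL3) automatic.

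The heart of the argument is the verification of (HL4). I would take $\lambda:=2$ and $c:=4^{n+2}e^{24n}$. For an arbitrary $Z\in\mathcal{Z}$, the set $\tilde{Z}$ from Theorem \ref{2005122144} is exactly
\[
\tilde{Z}=\bigcup_{Z'\in\mathcal{Z},\,Z'\cap Z\neq\emptyset,\,\mathbf{m}^{n+1}(Z')\leq 2\mathbf{m}^{n+1}(Z)}Z',
\]
so Lemma \ref{2005160806} furnishes a rectangle $Z^*$ with $\tilde{Z}\subseteq Z^*$ and $\mathbf{m}^{n+1}(Z^*)\leq 4^{n+2}e^{24n}\mathbf{m}^{n+1}(Z)$; monotonicity of $\mathbf{m}^{n+1}$ then yields $\mathbf{m}^{n+1}(\tilde{Z})\leq c\,\mathbf{m}^{n+1}(Z)$, i.e.\ condition (HL4).

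With (HL1), (HL2), (HL3), (HL4) all verified, Theorem \ref{2005122144} gives $\mathscr{H}^{\mathrm{CZ}}_{\mathbf{m}^{n+1}}\leq c=4^{n+2}e^{24n}$, which is the claim. The main obstacle in this argument, namely the enveloping estimate controlling all competitors $Z'$ meeting $Z$ of bounded measure by a single rectangle of comparable measure, has already been absorbed into Lemma \ref{2005160806}; no further case analysis of the four regimes in (\ref{2005201542})--(\ref{2005201543}) is needed at this stage.
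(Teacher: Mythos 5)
Your proposal is correct and follows exactly the paper's own route: the paper likewise deduces the theorem immediately from Lemma \ref{2005160806} (which supplies condition (HL4) with $\lambda=2$ and $c=4^{n+2}e^{24n}$) together with Theorem \ref{2005122144}. Your additional verification of (HL1)--(HL3) is a harmless elaboration of what the paper leaves implicit.
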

\begin{proof}
It follows immediately from Lemma \ref{2005160806} and Theorem \ref{2005122144}.
\end{proof}
The above result recover the first part of \cite[Proposition 2.2]{LiuVallarinoYang1}. Note that in \cite{LiuVallarinoYang1} the cubes placed in the
first $n$-components of Calder\'{o}n-Zygmund sets are dyadic cubes. Thus by Theorem \ref{2005220800}(i) our result is stronger than the mentioned
result of \cite{LiuVallarinoYang1}.

At the end of this note we consider a possible application of the inequality (\ref{2005072329}) to find some estimates for
mass density in (physical) space:

Suppose we are given a distribution of mass in Euclidean space $\bb{R}^3$. Such a distribution may be distinguished by a Borel measure $F$
on $\bb{R}^3$. Due to possibility of existence of massive point particles $F$ might be a singular measure with respect to $3$-dimensional
Lebesgue measure. For instance if we have $N$ particles with masses $m_1,\ldots,m_N$ in unit $\r{kg}$
placed at $x_1,\ldots,x_N$ in $\bb{R}^3$ then $F=\sum_{i=1}^Nm_i\delta_{x_i}$. Anyway, for every such a measure $F$ distinguishing mass
distribution, the maximal function $\b{M}_{\b{m}^3}^\r{cube}F$ with respect to the family of cubes (or other reasonable families)
may be interpreted as the \emph{maximal mass density function} in unit $\r{kg}/\r{m}^3$.
Suppose that the amount of the total mass presented in the space is $F(\bb{R}^3)=M$. Then it is clear that $\|F\|_\c{Q}=M$.
Also from the inequality (\ref{2005072329}) the following is immediately deduced:
\begin{enumerate}
\item[]
Suppose that we have a box with side length $1$ meter. Then the volume of the set of all points $x$ such that the box can be placed in the space
in the way that contains $x$ and includes more than $\alpha M$ $\r{kg}$ of mass, is less than
$\alpha^{-1}\s{H}_{\b{m}^3}^\r{cube}$. Here $\alpha$ is a value in unit $\r{m}^{-3}$ with $0<\alpha<1$. Note that $\s{H}_{\b{m}^3}^\r{cube}$
is a dimension less quantity. Note also that this estimate is independent of the \emph{shape} of the distribution $F$
and even of the total mass $M$. So it is valid in every \emph{time} for any system of particles or a fluid body moving in space
and probably losing or gaining mass.
\end{enumerate}
The above analysis may be applied in many similar problems. For instance suppose that this time $F$ gives the number of particles in the space.
Thus if we have $N$ particles placed at $x_1,\ldots,x_N$ in space then $F:=\sum_{i=1}^N\delta_{x_i}$ and $\|F\|_\c{Q}=N$.
In this case $\b{M}_{\b{m}^3}^\r{cube}F$ may be interpreted as \emph{maximal number function of particles per unit volume}.
{\footnotesize}
\end{document}